\documentclass[12pt]{amsart}
\usepackage[all]{xy}
\usepackage{amsfonts}
\usepackage{amssymb}

\usepackage[T2A]{fontenc}
\usepackage[cp1251]{inputenc}

\usepackage{amsmath,amssymb,amsthm, latexsym, amscd}

\usepackage{hyperref}

\newtheorem{thm}{Theorem}
\newtheorem{lemma}[thm]{Lemma}
\newtheorem{prop}[thm]{Proposition}

\newtheorem{cor}[thm]{Corollary}
\newtheorem{pr}[thm]{Problem}

\theoremstyle{definition}

\newtheorem{rk}[thm]{Remark}

\newcommand{\rom}[1]{{\em #1}}
\renewcommand{\(}{\rom(}
\renewcommand{\)}{\rom)}
\renewcommand{\:}{\colon}
\newcommand{\sm}{\setminus}
\renewcommand{\ss}{\subset}

\newcommand{\codis}{\operatorname{codis}}
\newcommand{\dis}{\operatorname{dis}}
\newcommand{\Id}{\operatorname{Id}}

\newcommand{\dl}{\delta}

\newcommand{\Lra}{$\Longrightarrow$}
\newcommand{\Llra}{$\Longleftrightarrow$}

\renewcommand{\b}{\beta}
\newcommand{\e}{\varepsilon}
\newcommand{\g}{\gamma}
\renewcommand{\l}{\lambda}

\newcommand{\cE}{\mathcal{E}}

\newcommand{\N}{\mathbb{N}}
\newcommand{\R}{\mathbb{R}}

\makeatletter
\def\namedlabel#1#2{\begingroup\def\@currentlabel{#2}\label{#1}\endgroup}
\makeatother

\makeatletter
\def\.{.\spacefactor\@m}
\makeatother

\begin{document}

\title{Mazur--Ulam Theorem With Gromov-Hausdorff Distance.}

\author{ S.\,A.~Bogaty , A.\,A.~Tuzhilin }

\date{}
\maketitle

\begin{abstract}
It is shown that two Banach spaces are linearly isometric if and only if the Gromov--Hausdorff distance between them is finite, in particular, zero. The proof is compilative and relies on results obtained by many researchers on the approximability of almost-surjective almost-isometries by linear surjective isometries. In the finite-dimensional case, previously obtained by I.~Mikhailov, a simpler proof under weaker assumptions is given. In the finite-dimensional case, a criterion for isometry in terms of finite (compact) subsets is also given.

\textbf{Keywords\/}: metric space, Gromov--Hausdorff distance, Banach space, isometry.

\thanks{A.A. Tuzhilin's work was supported by a grant from the Russian Science Foundation (project 25-21-00152) and the Sino-Russian Mathematical Center at Peking University.}

\emph{Semeon A. Bogatyi}, \email{bogatyi@inbox.ru}

\emph{Alexey A. Tuzhilin}, \email{tuz@mech.math.msu.su}
\end{abstract}

The famous Mazur--Ulam theorem of 1932 states that \emph{every surjective isometric mapping $f\:V\to W$ of real normed spaces is affine}~\cite{MU32}. The simplest proof of the theorem was given in 2003 by J.~V\"ais\"al\"a~\cite{V03} and in 2012 by B.~Nika~\cite{N12}. In the case of a strictly normed space $W$, the statement holds even without the  surjectivity assumption on the mapping~\cite{B71}.

In 1932, in his famous monograph ``Theory of Linear Operations'' Stefan Banach connec\-ted the homeomorphism of metrizable compact spaces with the (linear) isometry of spaces of continuous functions on them~\cite[Chapter XI, \S 4, Theorem 3]{B32}. In 1937, Marshall Harvey Stone~\cite[Theorem 83]{S37} and in 1942, Samuel Eilenberg~\cite[Theorem 7.2]{E42} removed the assumption of metrizability of spaces.

\begin{thm}\label{thm:compactspaces}
For compact Hausdorff spaces $X$ and $Y$, the following conditions are equiva\-lent\/\rom:
\begin{enumerate}
\item the spaces $X$ and $Y$ are homeomorphic\/\rom;
\item the Banach spaces $V=C(X)$ and $W=C(Y)$ of real continuous functions with $\sup$-norm are linearly isometric\/\rom;
\item the metric spaces $V=C(X)$ and $W=C(Y)$ are isometric.
\end{enumerate}
\end{thm}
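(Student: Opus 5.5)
The plan is to prove the cycle (1)\Lra(2)\Lra(3)\Lra(2)\Lra(1). Only (2)\Lra(1), the Banach--Stone theorem, needs real work.

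\emph{(1)\Lra(2).} Given a homeomorphism $h\:Y\to X$, the composition operator $T f=f\circ h$ is linear and bijective, with inverse $g\mapsto g\circ h^{-1}$. It preserves the $\sup$-norm because $h$ is surjective.

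\emph{(2)\Lra(3).} This is trivial, since a linear isometry is an isometry.

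\emph{(3)\Lra(2).} Let $f\:C(X)\to C(Y)$ be a surjective isometry. By the Mazur--Ulam theorem recalled above, $f$ is affine, so $T=f-f(0)$ is a surjective linear isometry.

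\emph{(2)\Lra(1).} I would follow Banach's and Stone's extreme point argument. Let $T\:C(X)\to C(Y)$ be a surjective linear isometry. Its adjoint $T^*\:C(Y)^*\to C(X)^*$ is a weak$^*$-continuous surjective linear isometry. It therefore maps the dual unit ball onto the dual unit ball and preserves extreme points. By the Riesz representation theorem, $C(X)^*$ is the space of regular signed Radon measures with the total variation norm. The extreme points of its unit ball are exactly $\pm\delta_x$, $x\in X$. Verifying this characterization is the main technical step: a measure that is not of that form splits as a nontrivial convex combination, via a Hahn decomposition or by splitting its support into two disjoint pieces of positive mass. Hence for every $y\in Y$ we get $T^*\delta_y=\sigma(y)\delta_{\varphi(y)}$ with $\sigma(y)\in\{\pm1\}$ and $\varphi(y)\in X$. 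Since $T^*$ is bijective on extreme points, $\varphi$ is a bijection $Y\to X$.

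It remains to prove continuity. Evaluating on the constant function $1$ gives $\sigma(y)=(T1)(y)$, so $\sigma$ is continuous. For any $f\in C(X)$ we have $f(\varphi(y))=\sigma(y)(Tf)(y)$, which is a continuous function of $y$. Because $X$ is compact Hausdorff, continuous functions separate points and determine its topology. So $\varphi$ is continuous, and as a continuous bijection from a compact space onto a Hausdorff space it is a homeomorphism.

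I expect the hardest part to be the precise identification of the extreme points of the unit ball of $C(X)^*$ when $X$ is nonmetrizable, where regularity of the measures is needed to localize. If a more elementary route is preferred, I would instead characterize points of $X$ by the intrinsic norm property of peak-like functions, as in Stone's original approach. The extreme point argument, however, seems the cleanest to commit to.
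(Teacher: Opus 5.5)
Your proof is correct. The paper gives no proof of this theorem: it cites it as the classical Banach--Stone theorem (Banach for metrizable compacta, Stone and Eilenberg in general). The step $(3)\Rightarrow(2)$ is left implicit there, supplied by the Mazur--Ulam theorem from the opening paragraph, exactly as you use it.

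For $(2)\Rightarrow(1)$ you give a self-contained argument via extreme points of the dual ball. That is the Arens--Kelley proof rather than Banach's or Stone's. Banach's original argument works intrinsically in $C(X)$, through functions whose modulus peaks at a single point, and that is where metrizability enters. The dual route is shorter and covers all compact Hausdorff $X$. Its price is the Riesz representation theorem and regularity, which you correctly flag: a norm-one $\mu$ with $\operatorname{supp}|\mu|=\{x\}$ equals $\pm\delta_x$ only because inner regularity on the open set $X\setminus\{x\}$ gives $|\mu|(X\setminus\{x\})=0$.

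Two small points need tightening.
\begin{itemize}
\item The splitting that works in general is the support one. Disjoint open neighbourhoods $U_1,U_2$ of two support points give $0<|\mu|(U_1)<1$, so $\mu$ is a proper convex combination of its normalized restrictions to $U_1$ and $X\setminus U_1$. A Hahn decomposition alone does nothing for a positive measure.
\item You also need the easy converse, that each $\pm\delta_x$ is extreme. If $\delta_x=t\mu+(1-t)\nu$ with $\|\mu\|,\|\nu\|\le1$ and $0<t<1$, evaluating at $\{x\}$ forces $\mu(\{x\})=\nu(\{x\})=1$, hence $\mu=\nu=\delta_x$. This is what guarantees that $T^*\delta_y$ is extreme at all, and that $\varphi$ is onto.
\end{itemize}
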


One of the ways to strengthen the Mazur--Ulam theorem arose from the 1945 theorem of Hyers--Ulam~\cite{HU45}. The distance between points $x$ and $x'$ of an arbitrary metric space will be denoted by $|xx'|$. For a mapping $f\:X\to Y$ of metric spaces, the \emph{distortion\/} is
\begin{equation}\label{eq:1}
\dis f=\sup_{x,x'\in X}\Bigl|\bigl|f(x)f(x')\bigr|-|xx'|\Bigr|.
\end{equation}
A mapping $f$ is called an \emph{$\e$-isometry\/} if and only if $\dis f\le\e$.

The Hyers--Ulam theorem states that \emph{for every surjective $\e$-isometry
$$
f\:V\to W,\ V=W
$$
of a complete Euclidean vector space onto itself, for which $f(0)=0$, there exists a bijective linear isometry $U\:V\to W$ such that
\begin{equation}\label{eq:2}
\bigl\|f(v)-U(v)\bigr\|\le M=10\e\ \text{ for all $v\in V$}.
\end{equation}
This isometry $U$ is given by the formula $U(v)=\lim_{n\to\infty}\frac1{2n}f(2nv)$.}

D.G.~Bourgin (1946) proved that such a linear isometry of $U$ exists whenever $W$ belongs to the class of uniformly convex Banach spaces, including $L_p[0,1]$ for $1<p<\infty$. Here we can take $M=12$ in (\ref{eq:2}). In a subsequent paper by Hyers and Ulam (1947), a positive result was obtained for the case where $V=C(X)$ and $W=C(Y)$ are Banach spaces of real continuous functions on compact Hausdorff spaces with the sup-norm, provided that $f$ is a homeomorphism. In particular, we can take $M=21$. This study was continued by D.G.~Bourgin~\cite{B46} (1949), who showed, in particular, that the assumptions of continuity and one-to-oneness of $f$ can be lifted. This result is a significant generalization of the classical Banach--Stone theorem.

In 1975, R.D.~Bourgin considered the Hyers-Ulam problem and studied mainly the finite-dimensional case. He showed that in this case the answer is positive if we assume that the set of extremal boundary points of the unit ball in $V$ or $W$ is completely disconnected. D.G.~Bourgin (1978) proved that the Hyers--Ulam theorem is true for $\dim V=\dim W=2$.

P.M.~Gruber~\cite{G78} (1978) showed that if for a surjective $\e$-isometry $f\:V\to W$ satisfying $f(0)=0$ there exists an approximating linear isometry $U$, $U(0)=0$, with some constant $M$ on the right-hand side of~(\ref{eq:2}), then this isometry is unique and for it we can assume that $M=5\e$. If it is additionally known that $U$ is continuous, then $M=3\e$. He also proved the existence of an approximating linear isometry for the $\e$-isometry of finite-dimensional spaces $V$ and $W$.

In 1983, Gevirtz generalized the Hyers--Ulam theorem to arbitrary Banach spaces $V$ and $W$. Finally, in 1995, M.~Omladi\v{c} and P.~\v{S}emrl showed that $M=2\e$ is a sharp constant in~(\ref{eq:2}) for general Banach spaces. In 1998, \v{S}emrl proved that $M=\g_V\e$ is valid, where Gevirtz is Young's constant. Other proofs of Gevirtz's theorem are published in~\cite{R01} and~\cite{G11.2}.

The assumption of surjectivity is a necessary condition for these results. Namely, D.H.~Hyers and S.M.~Ulam~\cite{HU45} already gave an example of an $\e$-isometry $f$ from the real line to the Euclidean plane such that $\Bigl\{\bigl|f(x)-U(x)\bigr|:x\in\R\Bigr\}$ is an unbounded set for any isometry $U\:\R\to\R^2$.

In fact, already R.D.~Bourgin noted~\cite[p. 312]{B75}: \emph{Although the following results are given for surjective $\e$-isometries, it should be noted that only minor changes are required to adjust the proofs if ``surjective'' is replaced by ``$d_H(f(V),W)<\infty$''.}

But, apparently, the first explicitly formulated result for a non-surjective mapping was published in 1997 and concerns the case of finite-dimensional spaces of one dimension $V=W=\ell_2^n$, see~\cite{BS97}. We present a formulation of a somewhat final result of S.J.~Dilworth~\cite[Proposition 2]{D99} from 1999, which we use here.

\begin{thm}\label{thm:D}
If $\e,\,\dl\ge0$ and $f\:V\to W$ is an $\e$-isometry between Banach spaces satisfying the condition $f(0)=0$ and mapping $\dl$-surjectively the space $V$ onto some closed subspace $L\ss W$ \($d_H\bigl(L,f(V)\bigr)\le\dl$\), then there exists a bijective linear isometry $U\:V\to L$ such that
\begin{equation}\label{eq:3}
\bigl\|f(v)-U(v)\bigr\|\le M=12\e+5\dl\ \text{for all $v\in V$}.
\end{equation}
\end{thm}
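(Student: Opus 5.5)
The plan is to reduce Theorem~\ref{thm:D} to the surjective case, that is, Gevirtz's theorem with the Omladi\v{c}--\v{S}emrl constant. By that result, every surjective $\e'$-isometry $g\:V\to L$ with $g(0)=0$ between Banach spaces lies within $2\e'$ of a surjective linear isometry.

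\textbf{Step 1: make $f$ surjective onto $L$.} For each $v\in V$, the hypothesis $d_H\bigl(L,f(V)\bigr)\le\dl$ gives a point $g(v)\in L$ with $\bigl\|g(v)-f(v)\bigr\|\le\dl$ (strictly, $\le\dl+\eta$ for an arbitrary $\eta>0$, which is removed at the end). Put $g(0)=0$; this is allowed because $0=f(0)$ is within $\dl$ of $L$, and in fact $0\in L$. Then $\dis g\le\e+2\dl$.

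\textbf{Step 2: correct the image.} The map $g$ need not be onto $L$. Still, every $w\in L$ is within $\dl$ of some $f(v)$, hence within $2\dl$ of $g(v)$. So $g$ is a $2\dl$-surjective $(\e+2\dl)$-isometry $V\to L$. This is only ``coarse surjectivity'', which is the situation R.D.~Bourgin's remark addresses. We fix it in one of two ways:
\begin{enumerate}
\item Rerun the Gevirtz/Omladi\v{c}--\v{S}emrl argument with $d_H\bigl(g(V),L\bigr)\le 2\dl$ in place of exact surjectivity. This should add a constant multiple of $\dl$ to $M$.
\item Build the approximating linear map directly by the Hyers--Ulam limit $U(v)=\lim_{n\to\infty}\frac1{2^n}g(2^nv)$. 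We would show that this limit exists and is an isometry onto $L$, using the fact that the rescaled maps $g_n(v)=2^{-n}g(2^nv)$ are $2^{-n}(\e+2\dl)$-isometries that are $2^{-n}\cdot 2\dl$-surjective.
\end{enumerate}
In the second route, scaling kills both defects in the limit. The limit $U$ is then a surjective isometry $V\to L$ (surjectivity follows from density of images plus completeness of $L$) fixing $0$, so it is linear by the Mazur--Ulam theorem.

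\textbf{Step 3: the uniform bound.} The main obstacle is proving that the limit exists and satisfies $\bigl\|g(v)-U(v)\bigr\|\le C(\e+\dl)$ uniformly in $v$. The Hyers--Ulam Cauchy estimate $\bigl\|g(2x)-2g(x)\bigr\|\le c\e$ holds only in strictly convex spaces, which is exactly why Gevirtz's theorem is hard. Rather than redo it, I would first try to cite Gevirtz/Omladi\v{c}--\v{S}emrl as a black box. To reduce to exact surjectivity, I would extend $g$ to a genuinely surjective map $\tilde g\:\tilde V\to L$ on a suitable set $\tilde V\supset V$ obtained from $V$ by adding a $2\dl$-net correction. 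Since that is awkward, the fallback is to follow Dilworth's proof, which adapts \v{S}emrl's midpoint-type estimates while tracking $\dl$ and yields $12\e+5\dl$.

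\textbf{Step 4: assemble the bound.} The final constant is obtained from $\bigl\|f-U\bigr\|\le\bigl\|f-g\bigr\|+\bigl\|g-U\bigr\|\le\dl+C(\e+2\dl)$, followed by letting $\eta\to0$.
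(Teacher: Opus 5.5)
The paper gives no proof of Theorem~\ref{thm:D}; it quotes the result from Dilworth. So your final fallback (``follow Dilworth's proof'') is exactly the paper's treatment. Read as a proof, though, your proposal has a genuine gap.

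Steps~1--2 are correct but do only the easy part. They produce a $2\dl$-surjective $(\e+2\dl)$-isometry $g\:V\to L$ with $g(0)=0$ and $\|f-g\|\le\dl$. The whole content of the theorem is the passage from such coarse surjectivity to an exactly surjective approximating linear isometry, and none of your routes supplies it.
\begin{itemize}
\item Route~(1) just asserts the conclusion.
\item Route~(2) needs the Cauchy estimate for $2^{-n}g(2^nv)$, which, as you note, has no direct proof without convexity of the target. Moreover, even granting pointwise convergence, surjectivity of $U$ does not follow from the $2^{1-n}\dl$-density of $g_n(V)$. For that you need $\sup_v\|g_n(v)-U(v)\|\to0$, which is the very uniform bound of Step~3.
\item Route~(3) enlarges the domain to a set $\tilde V\supset V$. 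Gevirtz's theorem and the Omladi\v{c}--\v{S}emrl constant concern maps defined on a Banach space, so the black box cannot be applied to $\tilde g$.
\end{itemize}

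What is missing is a way to make the map exactly surjective by redefining it \emph{on $V$ itself}, and doing so directly from $f$ rather than from $g$. One way:
\begin{itemize}
\item Assume $V\ne0$ (otherwise $L=0$). Fix small $r,\eta>0$ and a maximal $r$-separated set $N\ss V$ with $0\in N$.
\item For each $w\in L$ pick $v_w$ with $\|f(v_w)-w\|\le\dl+\eta$, and $n_w\in N$ with $\|n_w-v_w\|<r$.
\item The open balls $B(n,r/2)$, $n\in N$, are pairwise disjoint and have cardinality $|V|\ge|L|$. (For instance, $\operatorname{dens}L\le\operatorname{dens}V$ by rescaling a net, and $|X|=(\operatorname{dens}X)^{\aleph_0}$ for Banach spaces $X\ne0$.)
\item Put $h(0)=0$. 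Let $h$ map $B(n,r/2)\sm\{0\}$ onto the fiber $\{w\in L: n_w=n\}$ whenever that fiber is nonempty. Elsewhere let $h(x)\in L$ be within $\dl+\eta$ of $f(x)$.
\end{itemize}
Then $h\:V\to L$ is onto. For $h(x)=w$ one gets $\|w-f(x)\|\le\|w-f(v_w)\|+\|v_w-x\|+\e\le\e+\dl+s$, where $s=\eta+\tfrac{3}{2}r$. Hence $\dis h\le3\e+2\dl+2s$. Omladi\v{c}--\v{S}emrl then gives a surjective linear isometry $U\:V\to L$ with $\|h-U\|\le2\dis h$, so $\|f-U\|\le7\e+5\dl+5s$. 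Two linear maps at bounded distance from $f$ coincide, so $U$ does not depend on $r,\eta$. Letting $s\to0$ gives $\|f-U\|\le7\e+5\dl\le12\e+5\dl$. (The cruder estimate $\|h-f\|\le2\e+\dl$, obtained by passing through $f(n_w)$, yields exactly $(2\e+\dl)+2(5\e+2\dl)=12\e+5\dl$.)

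Your Step~4 as set up cannot reach the stated constant. Suppose the case $L=W$ is used as a black box $C_1\e'+C_2\dl'$; necessarily $C_1\ge2$, since the Omladi\v{c}--\v{S}emrl constant is sharp. Substituting $\e'=\e+2\dl$ and $\dl'=2\dl$ gives a $\dl$-coefficient of $1+2C_1+2C_2\ge5+2C_2$. You would therefore need the $\dl$-free bound $2\e'$ of \v{S}emrl--V\"ais\"al\"a, which is stronger than the theorem being proved.
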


In 2000, J.~Tabor independently proved the stated theorem for $L=W$ with the constant $M=2\e+35\dl$ on the right-hand side (\ref{eq:3}). Recently, a simpler reduction of the Dilworth--Tabor theorem to the Gevirtz theorem was obtained. In 2003, P.~\v{S}emrl and J.~V\"ais\"al\"a showed that $M=2\e+2\dl$ can be taken on the right-hand side of~(\ref{eq:3}). They also showed that the universal estimate with $M=2\e$ holds for $L=W$.

For mappings $f\:X\to Y$ and $g\:Y\to X$ of metric spaces, \emph{codistortion\/} is
\begin{equation}\label{eq:4}
\codis(f,g)=\sup_{x\in X,y\in Y}\Bigl|\bigl|f(x)y\bigr|-\bigl|x\,g(y)\bigr|\Bigr|.
\end{equation}
Definitions~(\ref{eq:1}) and~(\ref{eq:4}) do not assume continuity of the possibly multivalued mappings under consideration. In the case of a multivalued mapping, $f(x)$ and $g(y)$ can take any points from these sets.

For any two metric spaces $X$ and $Y$, the \emph{Gromov--Hausdorff distance\/} is the quantity
$$
d_{GH}(X,Y)=\inf_{f,g}d_H\bigl(f(X),g(Y)\bigr),
$$
where $f\:X\to Z$ and $g\:Y\to Z$ are isometric embeddings into all possible metric spaces $Z$. There is a second important approach to defining the Gromov--Hausdorff distance~\cite{BBI}:
\begin{equation}\label{eq:5}
d_{GH}(X,Y)=\frac12\inf_{f,g}\Bigl\{\max\bigl\{\dis f,\,\codis(f,g),\,\dis g\bigr\}\,\big\vert\,f\:X\to Y,\,g\:Y\to X\Bigr\}.
\end{equation}

For any two metric spaces $X$ and $Y$, we define the \emph{continuous Gromov--Hausdorff distance\/} as in~\cite{LMS23, BT26}:
\begin{equation*}
d_{GH}^c(X,Y)=\frac12\inf_{\substack{\text{$f$, $g$ ---}\\ \text{continuous}}}\Bigl\{\max\bigl\{\dis f,\,\codis(f,g),\,\dis g\bigr\}\,\big\vert\,f\:X\to Y,\,g\:Y\to X\Bigr\}.
\end{equation*}
Obviously, $d_{GH}(X,Y)\le d_{GH}^c(X,Y)$.

\begin{thm}\label{thm:equivalent}
For real Banach spaces $V$ and $W$, the following conditions are equivalent\/\rom:
\begin{enumerate}
\item\namedlabel{thm:equivalent:1}{(1)} the spaces $V$ and $W$ are linearly isometric\/\rom;
\item\namedlabel{thm:equivalent:2}{(2)} the spaces $V$ and $W$ are isometric\/\rom;
\item\namedlabel{thm:equivalent:3}{(3)} $d_{GH}^c(V,W)=0$\rom;
\item[$(3')$]\namedlabel{thm:equivalent:3'}{(3')} $d_{GH}^c(V,W)<\infty$\rom;
\item[$(4')$]\namedlabel{thm:equivalent:4'}{(4')} $d_{GH}(V,W)<\infty$\rom;
\item[$(5)$]\namedlabel{thm:equivalent:5}{(5)} the unit balls $B_V$ and $B_W$ are isometric.
\end{enumerate}
\end{thm}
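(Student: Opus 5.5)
We will close a cycle of implications. The easy ones are \ref{thm:equivalent:1}\Lra\ref{thm:equivalent:2}, \ref{thm:equivalent:1}\Lra\ref{thm:equivalent:3}, \ref{thm:equivalent:3}\Lra\ref{thm:equivalent:3'}, \ref{thm:equivalent:3'}\Lra\ref{thm:equivalent:4'} and \ref{thm:equivalent:1}\Lra\ref{thm:equivalent:5}. For \ref{thm:equivalent:1}\Lra\ref{thm:equivalent:3}, take $f=U$ and $g=U^{-1}$ for a linear isometry $U$. Then all three quantities $\dis f$, $\codis(f,g)$, $\dis g$ vanish and both maps are continuous. The inequality $d_{GH}\le d_{GH}^c$ gives \ref{thm:equivalent:3'}\Lra\ref{thm:equivalent:4'}. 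A linear isometry maps $B_V$ onto $B_W$, which gives \ref{thm:equivalent:1}\Lra\ref{thm:equivalent:5}. The implication \ref{thm:equivalent:2}\Lra\ref{thm:equivalent:1} is the Mazur--Ulam theorem: compose the isometry with a translation so that $0\mapsto0$, and it becomes linear. It therefore remains to prove \ref{thm:equivalent:4'}\Lra\ref{thm:equivalent:1} and \ref{thm:equivalent:5}\Lra\ref{thm:equivalent:1}.

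For \ref{thm:equivalent:4'}\Lra\ref{thm:equivalent:1} we use representation~(\ref{eq:5}). Since $d_{GH}(V,W)<\infty$, there are maps $f\:V\to W$ and $g\:Y\to X$ (here $Y=W$, $X=V$) with $\dis f,\dis g,\codis(f,g)\le\e<\infty$. The maps need not be continuous or single-valued; we choose single values. Replacing $f$ by $f-f(0)$ changes none of these quantities up to a bounded amount, and distortion is translation invariant, so we may assume $f(0)=0$.

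We next show that $f$ is $\dl$-surjective onto $W$. For $w\in W$, the codistortion bound with $x=g(w)$ gives
\[
\bigl\|f(g(w))-w\bigr\|\le\bigl|g(w)\,g(w)\bigr|+\e=\e.
\]
Hence $d_H\bigl(f(V),W\bigr)\le\e$. Theorem~\ref{thm:D} with $L=W$ and $\dl=\e$ now yields a bijective linear isometry $U\:V\to W$. This is the real content of the statement, and it is supplied entirely by the Dilworth result, so the step is routine once the codistortion estimate is noted.

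The remaining implication \ref{thm:equivalent:5}\Lra\ref{thm:equivalent:1} is the main obstacle, since Mazur--Ulam needs surjectivity between whole spaces. Let $\varphi\:B_V\to B_W$ be a surjective isometry. The plan is to show first that $\varphi(0)=0$, using a metric characterization of the center of the ball. In a ball of radius~$1$, the point $0$ is the unique point $p$ such that for each $x$ the point $2p-x$ lies in the ball. Equivalently, $0$ is characterized as a metric midpoint with respect to the whole ball. We would adapt V\"ais\"al\"a's reflection argument to show that the ``center'' is preserved by isometries: $0$ is the unique point $p$ with $\sup_{x}|px|\le 1$ such that the ball has diameter $2$. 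The difficulty is that in non-strictly convex spaces many points may realize such conditions, so one has to use the iterated midpoint sets $K_n$ from V\"ais\"al\"a's proof, with $K_0=B_V$ and $K_{n+1}$ consisting of the points of $K_n$ whose distance to every point of $K_n$ is at most half the diameter of $K_n$. One then checks that $\bigcap K_n=\{0\}$, using the reflection $x\mapsto -x$ of the ball.

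Having $\varphi(0)=0$, we would next prove that $\varphi$ is affine on the ball. The same midpoint-set argument, applied to the sub-balls around midpoints that are contained in $B_V$, shows that $\varphi$ preserves midpoints. For the final step, we extend $\varphi$ by homogeneity, setting $\Phi(v)=\|v\|\varphi\bigl(v/\|v\|\bigr)$, and check that $\Phi$ is a linear surjective isometry $V\to W$. Alternatively, we can define $\Phi(v)=n\varphi(v/n)$ for large $n$; midpoint preservation makes this well defined and isometric, and surjectivity comes from surjectivity of $\varphi$.
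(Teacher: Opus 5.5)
Your $(4')\Rightarrow(1)$ is exactly the paper's argument: the codistortion bound with $x=g(w)$ makes $f$ $\e$-surjective, and Theorem~\ref{thm:D} with $L=W$ gives the linear isometry. The easy implications are also fine; going through Mazur--Ulam for $(2)\Rightarrow(1)$ instead of $(2)\Rightarrow(3)$ is harmless.

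The divergence is $(5)\Rightarrow(1)$. The paper simply cites Mankiewicz's theorem, whereas you try to reprove the ball case with Mazur--Ulam midpoint sets. That route is viable, but the midpoint step as written has a gap. The set $H_1=\{z\in B_V:\|z-x\|=\|z-y\|=\|x-y\|/2\}$ and its iterates must be computed inside the ball. The halving of diameters needs their symmetry under $z\mapsto x+y-z$. This symmetry holds only when the whole-space set, which lies in $\bar B\bigl(m,\|x-y\|/2\bigr)$, is contained in $B_V$. Otherwise the construction picks the wrong point: in $\ell_\infty^2$ with $x=(1,1)$, $y=(1,-1)$, the truncated set is $[0,1]\times\{0\}$, and its iterated center is $(1/2,0)\ne m=(1,0)$. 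The same containment is needed on the image side around $m'=\bigl(\varphi(x)+\varphi(y)\bigr)/2$. Nothing in your ``sub-balls around midpoints contained in $B_V$'' controls that. Applying the center characterization to the sub-balls $\bar B(m,r)$ themselves would be circular, since you do not know that $\varphi$ maps them onto balls centered at $m'$.

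The missing idea is to use $\varphi(0)=0$ at this point. It gives $\|\varphi(x)\|=\|x\|$, hence $\|m'\|\le(\|x\|+\|y\|)/2$. So both containments hold whenever $\|x\|+\|y\|+\|x-y\|\le2$, for instance for $x,y\in\tfrac12B_V$, and for $sv,tv$ with $s,t\in[0,1]$ and $\|v\|\le1$. You therefore get midpoint preservation only for such pairs, not on all of $B_V$ as you claim, but that suffices:
\begin{itemize}
\item continuity gives $\varphi(tv)=t\varphi(v)$ on rays, so $\Phi(v)=n\varphi(v/n)$ is well defined;
\item $\Phi$ is additive by midpoint preservation in $\tfrac12B_V$, and it is isometric;
\item $\Phi|_{B_V}=\varphi$, so $\Phi(B_V)=B_W$ and hence $\Phi(V)=W$.
\end{itemize}

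The first step is also easier than you fear. In every normed space $\sup_{x\in B_V}\|p-x\|=1+\|p\|$, so $K_1=\{0\}$ at once, and strict convexity plays no role. With these repairs you have a self-contained proof of exactly the special case of Mankiewicz's theorem the paper needs. The paper's citation is shorter and covers general convex bodies.
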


\begin{proof}
Implications \ref{thm:equivalent:1}\Lra\ref{thm:equivalent:2}\Lra\ref{thm:equivalent:3}\Lra$\ref{thm:equivalent:3'}$\Lra$\ref{thm:equivalent:4'}$ and \ref{thm:equivalent:1}\Lra\ref{thm:equivalent:5} are obvious.

$\ref{thm:equivalent:4'}$\Lra\ref{thm:equivalent:1}. From the condition $d_{GH}(V,W)<D<\infty$ it follows that there exist mappings $f\:V\to W$ and $g\:W\to V$ such that $\dis f<D$ and $\codis(f,g)<D$. The first inequality precisely means that the mapping $f$ is a $D$-isometry. The second inequality implies that for every vector $w\in W$, the inequality $\Bigl|f\bigl(g(w)\bigr)\,w\Bigr|<D$ holds, i.e., the mapping $f$ is $D$-surjective. By the Dilworth--Tabor theorem there is a linear surjective isometry $U\:V\to W$.

The implication \ref{thm:equivalent:5}\Lra\ref{thm:equivalent:1} is precisely Mankiewicz's theorem~\cite{M72}.
\end{proof}

\begin{cor}\label{cor:equivalent}
For a real Banach space $V$ and a normed space $W$, the following conditions are equivalent\/\rom:
\begin{enumerate}
\item the space $W$ is linearly isometrically embedded in $V$ as a dense subspace\/\rom;
\item\refstepcounter{enumi} the space $W$ is isometrically embedded in $V$ as a dense subset\/\rom;
\item $d_{GH}(V,W)=0$\rom;
\item[($4'$)] $d_{GH}(V,W)<\infty$.
\end{enumerate}
\end{cor}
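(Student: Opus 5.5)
The plan is to reduce everything to Theorem~\ref{thm:equivalent} by passing from $W$ to its completion. Let $\widehat W$ denote the completion of the normed space $W$; it is a real Banach space containing $W$ as a dense linear subspace. Since $W$ is dense in $\widehat W$, the Hausdorff distance between $W$ and $\widehat W$ inside $\widehat W$ is zero, so $d_{GH}(W,\widehat W)=0$. By the triangle inequality for $d_{GH}$ we get
\[
\bigl|d_{GH}(V,W)-d_{GH}(V,\widehat W)\bigr|\le d_{GH}(W,\widehat W)=0 .
\]

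The implications (1)\Lra(2)\Lra(3)\Lra($4'$) are immediate.
\begin{itemize}
\item For (1)\Lra(2): a linear isometric embedding is in particular an isometric embedding.
\item For (2)\Lra(3): if $W$ is isometric to a dense subset of $V$, then $d_{GH}(V,W)\le d_H(V,W)=0$.
\item (3)\Lra($4'$) is trivial.
\end{itemize}

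The substantive step is ($4'$)\Lra(1). Assume $d_{GH}(V,W)<\infty$. By the displayed inequality, $d_{GH}(V,\widehat W)<\infty$. Both $V$ and $\widehat W$ are Banach spaces, so the implication ($4'$)\Lra(1) of Theorem~\ref{thm:equivalent} gives a linear surjective isometry $U\:\widehat W\to V$. Restricting $U$ to $W$ yields a linear isometric embedding of $W$ into $V$. Its image $U(W)$ is dense in $V$, because $W$ is dense in $\widehat W$ and $U$ is a homeomorphism onto $V$. This is condition (1).

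The only point needing care is the claim $d_{GH}(W,\widehat W)=0$ together with the triangle inequality. Both are standard: the first holds by taking $Z=\widehat W$ in the definition, and the second is the usual triangle inequality for $d_{GH}$, which is valid for arbitrary, possibly unbounded, metric spaces with values in $[0,\infty]$. I expect no real obstacle beyond that, since the heavy lifting is the Dilworth--Tabor theorem already invoked in Theorem~\ref{thm:equivalent}. If one prefers to avoid the triangle inequality, use the characterization~(\ref{eq:5}) instead. Compose the almost-isometries $f\:V\to W$ and $g\:W\to V$ with the inclusion $W\hookrightarrow\widehat W$. For $g$, precompose with a nearest-point selection $\widehat W\to W$ moving points by at most a small $\eta>0$. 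This changes distortion and codistortion by at most $2\eta$, so $d_{GH}(V,\widehat W)<\infty$ directly.
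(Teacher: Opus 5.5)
Your argument is correct and matches the deduction the paper leaves implicit (the corollary is stated without proof as a consequence of Theorem~\ref{thm:equivalent}): pass to the completion $\widehat W$, use $d_{GH}(W,\widehat W)=0$ and the triangle inequality to get $d_{GH}(V,\widehat W)<\infty$, then restrict the resulting surjective linear isometry to $W$. Two cosmetic points: the displayed absolute difference is undefined when both distances are infinite, but you only need the one-sided bound $d_{GH}(V,\widehat W)\le d_{GH}(V,W)$. Also, in your alternative argument a true ``nearest-point'' selection into $W$ need not exist, though the $\eta$-approximate selection you actually describe works.
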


\begin{rk}\label{rk:equivalent}
If at least one one the spaces $V$ or $W$ is finite-dimensional, an essential part of Theorem~\ref{thm:equivalent} and Corollary~\ref{cor:equivalent} (implication $\ref{thm:equivalent:4'}$\Lra\ref{thm:equivalent:1}) was proven by I.~Mikhailov. His proof is quite complicated, but it does not rely on the even more complicated theory of $\e$-isometry.

We present a third proof to emphasize that even in the finite-dimensional case, the implication $\ref{thm:equivalent:4'}$\Lra\ref{thm:equivalent:1}, i.e., the theorem on the existence of an isometry, is significantly simpler than the isometry approximation theorem. In this regard, it is appropriate to recall the general Ulam problem of approximating of a rather good object with a perfect object.
\end{rk}

\begin{prop}\label{prop:equivalent}
For conical subsets of real normed spaces $K_V\ss V$ and $K_W\ss W$, the following conditions are equivalent\/\rom:
\begin{enumerate}
\setcounter{enumi}{3}
\item\label{prop:equivalent:4} $d_{GH}(K_V,K_W)=0$\rom;
\item[$(4')$]\namedlabel{prop:equivalent:4'}{(4')} $d_{GH}(K_V,K_W)<\infty$.
\end{enumerate}
\end{prop}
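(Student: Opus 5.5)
The implication (4)$\Rightarrow$$(4')$ is trivial, so the whole content is $(4')\Rightarrow(4)$. I would prove it by a homogeneity (rescaling) argument that uses only the conical structure and the description~(\ref{eq:5}) of $d_{GH}$ via distortion and codistortion. No linear structure beyond scalar multiplication is needed, and none of the $\e$-isometry approximation theory is used. I take ``conical'' to mean $tK\ss K$ for every $t>0$; if the paper's definition allows $t\ge0$, nothing changes.

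Suppose $d_{GH}(K_V,K_W)<D/2<\infty$. By~(\ref{eq:5}) there are mappings $f\:K_V\to K_W$ and $g\:K_W\to K_V$, possibly discontinuous, with $\dis f<D$, $\dis g<D$ and $\codis(f,g)<D$. For $\l>0$ define
$$
f_\l(x)=\l f(x/\l),\qquad g_\l(y)=\l g(y/\l).
$$
These are well defined as maps $K_V\to K_W$ and $K_W\to K_V$ precisely because the sets are conical: $x/\l\in K_V$, $f(x/\l)\in K_W$, and multiplying by $\l$ keeps us inside $K_W$.

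The norm is homogeneous, so for $x,x'\in K_V$ we get
$$
\Bigl|\bigl\|f_\l(x)-f_\l(x')\bigr\|-\|x-x'\|\Bigr|=\l\,\Bigl|\bigl\|f(x/\l)-f(x'/\l)\bigr\|-\|x/\l-x'/\l\|\Bigr|<\l D.
$$
Hence $\dis f_\l\le\l D$. The same computation gives $\dis g_\l\le\l D$ and $\codis(f_\l,g_\l)\le\l D$; for the codistortion, $\|f_\l(x)-y\|=\l\|f(x/\l)-y/\l\|$ and $\|x-g_\l(y)\|=\l\|x/\l-g(y/\l)\|$. Substituting into~(\ref{eq:5}) yields $d_{GH}(K_V,K_W)\le\l D/2$ for every $\l>0$. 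Letting $\l\to0$ gives $d_{GH}(K_V,K_W)=0$.

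Equivalently, the argument says that $x\mapsto\l x$ is an isometry from $(K_V,\l\|\cdot\|)$ onto $K_V$, so $d_{GH}(K_V,K_W)=\l\,d_{GH}(K_V,K_W)$ for all $\l>0$. A finite value therefore forces $0$.

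I see no real obstacle. The only points to check are that conicity is exactly what makes $f_\l$ and $g_\l$ land in the right sets, and that the argument never needs continuity, so the multivalued or discontinuous maps permitted in~(\ref{eq:5}) are fine.
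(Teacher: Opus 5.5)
Your proof is correct and uses the same homogeneity argument as the paper, which notes that $K=\l K$ for conical sets and invokes $d_{GH}(\l K_V,\l K_W)=\l\,d_{GH}(K_V,K_W)$. You verify that scaling law explicitly, via the conjugated maps $f_\l(x)=\l f(x/\l)$ and $g_\l(y)=\l g(y/\l)$ in the distortion--codistortion formula, which makes self-contained the step the paper leaves implicit.
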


\begin{proof}
$\ref{prop:equivalent:4'}$\Lra\ref{prop:equivalent:4}. Let $d_{GH}(K_V,K_W)<D<\infty$. Since for conical subsets we have $K=\l K$ for every $\l>0$, we get $d_{GH}(K_V,K_W)=d_{GH}(\l K_V,\l K_W)=\l d_{GH}(K_V,K_W)$. Therefore, $d_{GH}(K_V,K_W)=0$.
\end{proof}

For any two normed spaces $V$ and $W$, the Kadets distance is defined in~\cite{K75} as
$$
d_K(V,W)=\inf_{f,g}d_H\bigl(f(B_V),g(B_W)\bigr),
$$
where $f\:V\to Z$ and $g\:W\to Z$ are linear isometric embeddings into all possible normed spaces $Z$. Obviously, $d_{GH}(B_V,B_W)\le d_K(V,W)$.

For any two isomorphic normed spaces $V$ and $W$, the Banach–-Mazur distance is defined as follows~\cite[note to Chapter XI, \S 6]{B32}:
$$
d_{BM}(V,W)=\log\bigl(\inf_f\|f\|\cdot\|f^{-1}\|\bigr),
$$
where $f$ runs over all possible isomorphisms between $V$ and $W$. For non-isomorphic spaces, we set $d_{BM}(V,W)=\infty$.

\begin{thm}\label{thm:equivalentfinite}
For a normed space $V$ and a finite-dimensional normed space $W$, the following conditions are equivalent\/\rom:
\begin{enumerate}
\item\label{thm:equivalentfinite:1} the spaces $V$ and $W$ are linearly isometric\/\rom;
\refstepcounter{enumi}
\item\label{thm:equivalentfinite:3} $d_{GH}^c(V,W)=0$\rom;
\item[$(3')$]\namedlabel{thm:equivalentfinite:3'}{(3')} $d_{GH}^c(V,W)<\infty$\rom;
\item\label{thm:equivalentfinite:4} $d_{GH}(V,W)=0$\rom;
\item[$(4')$]\namedlabel{thm:equivalentfinite:4'}{(4')} $d_{GH}(V,W)<\infty$\rom;
\item\label{thm:equivalentfinite:5} single the balls $B_V$ and $B_W$ are isometric \/\rom;
\item\label{thm:equivalentfinite:6} $d_{BM}(V,W)=0$\/\rom;
\item\label{thm:equivalentfinite:7} $d_{GH}^c(B_V,B_W)=0$\rom;
\item\label{thm:equivalentfinite:8} $d_K(V,W)=0$\rom;
\item\label{thm:equivalentfinite:9} $d_{GH}(B_V,B_W)=0$.
\end{enumerate}
\end{thm}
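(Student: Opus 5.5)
Most implications here are formal, so the plan is to prove the easy ones and concentrate on two \emph{hard} implications: $(4')\Rightarrow(1)$ and $(9)\Rightarrow(1)$. The remark before the proposition says a simpler finite-dimensional argument is intended, so the goal is to avoid Theorem~\ref{thm:D}.

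\textbf{Easy implications.} These follow from the definitions.
\begin{itemize}
\item $(1)\Rightarrow(3)\Rightarrow(3')\Rightarrow(4')$ and $(3)\Rightarrow(4)\Rightarrow(4')$, using $d_{GH}\le d^c_{GH}$.
\item $(1)\Rightarrow(5)\Rightarrow(7)\Rightarrow(9)$, since an isometry of balls has zero distortion and zero codistortion with its inverse.
\item $(1)\Rightarrow(6)$ and $(1)\Rightarrow(8)$.
\item $(8)\Rightarrow(9)$, by $d_{GH}(B_V,B_W)\le d_K(V,W)$.
\item $(6)\Rightarrow(1)$. Take isomorphisms $f_n$ normalized so that $\|f_n\|=1$ and $\|f_n^{-1}\|\to1$. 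Then $\dim V=\dim W$ is finite, the $f_n$ lie in a compact set of operators, and a limit point is a linear map of norm $\le1$ with $\|f^{-1}\|\le1$, hence an isometry.
\end{itemize}
It therefore suffices to prove $(4')\Rightarrow(1)$ and $(9)\Rightarrow(1)$.

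\textbf{Step 1: $(4')\Rightarrow(9)$ by rescaling.} Suppose $d_{GH}(V,W)<D$, witnessed by $f,g$ with $\dis f,\dis g,\codis(f,g)<2D$. Since dilation by $\l$ multiplies $d_{GH}$ by $\l$, the rescaled spaces satisfy $d_{GH}(\l^{-1}V,\l^{-1}W)<D/\l$. Both sides contain their unit balls.

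The main obstacle is that $f$ does not map balls to balls. We fix this by translating so that $f(0)=0$, at the cost of a bounded error since $\|f(0)\|$ is fixed. Then $f$ sends $B_V(R)$ into $B_W(R+2D)$. Composing with a radial retraction onto $B_W(R)$, which moves points by at most $2D$, gives a map of distortion at most $O(D)$. The same is done for $g$, and codistortion is controlled the same way.

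Dividing by $R$ yields correspondences between $B_V$ and $B_W$ with distortion $O(D/R)$. Hence $d_{GH}(B_V,B_W)=O(D/R)\to0$ as $R\to\infty$, which gives $(9)$.

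\textbf{Step 2: $(9)\Rightarrow(1)$ by compactness.} First, $V$ is finite-dimensional. Otherwise $B_V$ is not totally bounded, while $B_W$ is compact, and a GH-distance of zero to a compact space forces total boundedness. So $B_V$ and $B_W$ are compact metric spaces.

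For compact metric spaces, $d_{GH}=0$ implies that they are isometric; this is a standard result in \cite{BBI}. Mankiewicz's theorem~\cite{M72}, i.e.\ the already-proven $(5)\Rightarrow(1)$, then gives a linear isometry.

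This also settles $(4)\Rightarrow(1)$ and $(3')\Rightarrow(1)$, since both imply $(4')$.
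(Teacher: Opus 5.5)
Your proof is correct and follows essentially the paper's route: every condition is funneled into $(9)$, then $(9)$ is upgraded via Riesz's theorem and the fact that compact spaces at Gromov--Hausdorff distance zero are isometric, and Mankiewicz's theorem then gives $(1)$. The differences are cosmetic. You merge the paper's scaling step $(4')\Rightarrow(4)$ (Proposition~\ref{prop:equivalent}) and its ``obvious'' $(4)\Rightarrow(9)$ into one translate--retract--rescale argument, and you prove $(6)\Rightarrow(1)$ by a compactness argument instead of citing Banach--Mazur.
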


\begin{proof}
Implications
\begin{center}
(\ref{thm:equivalentfinite:1})\Lra(\ref{thm:equivalentfinite:3})\Lra(\ref{thm:equivalentfinite:4})\Lra(\ref{thm:equivalentfinite:9}), (\ref{thm:equivalentfinite:1})\Lra$\ref{thm:equivalentfinite:3'}$\Lra$\ref{thm:equivalentfinite:4'}$,
(\ref{thm:equivalentfinite:1})\Lra(\ref{thm:equivalentfinite:5})\Lra (\ref{thm:equivalentfinite:7})\Lra(\ref{thm:equivalentfinite:9}),\\ (\ref{thm:equivalentfinite:1})\Lra(\ref{thm:equivalentfinite:6}) (\ref{thm:equivalentfinite:1})\Lra(\ref{thm:equivalentfinite:8})\Lra(\ref{thm:equivalentfinite:9})
\end{center}
are obvious.

The implication $\ref{thm:equivalentfinite:4'}$\Lra(\ref{thm:equivalentfinite:4}) follows from Proposition~\ref{prop:equivalent}.

The implication (\ref{thm:equivalentfinite:6})\Lra(\ref{thm:equivalentfinite:1}) is mentioned in~\cite[note To Chapter XI, \S 6]{B32} by S.~Banach as the result obtained together with S.~Mazur.

(\ref{thm:equivalentfinite:9})\Lra(\ref{thm:equivalentfinite:5}). For a finite-dimensional space $W$, the unit ball $B_W$ is compact and, in particular, totally bounded. Then the unit ball $B_V$ is also totally bounded. Consequently, by the Riesz theorem, the space $V$ is also finite-dimensional. Hence the unit ball $B_V$ is compact. For compact spaces, the equality of the distance between them to zero implies that they are isometric. Therefore, the unit balls $B_V$ and $B_W$ are isometric.
\end{proof}

\begin{rk}\label{rk:infinite}
Let us note that finite dimensionality space $W$ is used only the proofs of implications (\ref{thm:equivalentfinite:6})\Lra(\ref{thm:equivalentfinite:1}) and (\ref{thm:equivalentfinite:9})\Lra(\ref{thm:equivalentfinite:5}). In general (infinite-dimensional) case, Theorem~\ref{thm:equivalent} cannot contain the conditions~(\ref{thm:equivalentfinite:8}) and~(\ref{thm:equivalentfinite:9}). M.I.~Ostrovskii~\cite[Theorem 1, c)]{O84} constructed separable Banach spaces $V$ and $W$ such that $d_{GH}(B_V,B_W)=d_K(V,W)=0$, however, $d_{BM}(V,W)=\infty$.
\end{rk}

\begin{lemma}[{P.L.~Renz~\cite[Lemma 2.8]{B75}}]\label{lemma:1}
Let $V$ and $W$ be normed linear spaces, and $f\:V\to W$ be an arbitrary $\e$-isometry. Then there exists a continuous $4\e$-isometry $f^{*}\:V\to W$ such that $\bigl\|f^{*}(v)-f(v)\bigr\|\le2\e$ for every $v\in V$.
\end{lemma}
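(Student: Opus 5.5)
The plan is to replace $f$ by a piecewise affine interpolation of its values on a $\e$-dense discrete net in $V$, glued together with a partition of unity. First choose a maximal $\e$-separated set $N\ss V$. By maximality, every $v\in V$ lies within distance $<\e$ of some point of $N$. For $x\in N$, set
$$
\varphi_x(v)=\max\bigl\{0,\ \e-\|v-x\|\bigr\},
$$
which is continuous and supported in the open ball $B(x,\e)$. Define $\Phi(v)=\sum_{x\in N}\varphi_x(v)$. This sum is locally finite: any ball of radius $\e/2$ meets at most finitely many, indeed boundedly many in the finite-dimensional case, of the supports. In infinite dimensions local finiteness can fail for such a simple net. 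If so, I would instead invoke paracompactness of the metric space $V$ (A.H.~Stone) to get a locally finite partition of unity subordinate to the cover $\{B(v,\e)\}_{v\in V}$. This is the one genuinely topological input. The maximality argument gives $\Phi(v)>0$ everywhere, so we may normalize to $\psi_x=\varphi_x/\Phi$.

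Define
$$
f^*(v)=\sum_{x}\psi_x(v)\,f(x).
$$
This is continuous, being locally a finite sum of continuous functions times fixed vectors. The sum involves only indices $x$ with $\|v-x\|<\e$. In the general version, take indices $x$ with $\psi_x$ supported in some $B(x_i,\e)$ and use the value $f(x_i)$.

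For any such $x$, the distortion bound gives
$$
\bigl\|f(x)-f(v)\bigr\|\le\|x-v\|+\e<2\e.
$$
Since $f^*(v)-f(v)$ is a convex combination of vectors $f(x)-f(v)$ of norm $<2\e$, convexity of the norm yields $\bigl\|f^*(v)-f(v)\bigr\|\le2\e$.

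Finally, the distortion estimate follows from the triangle inequality:
$$
\Bigl|\bigl\|f^*(v)-f^*(v')\bigr\|-\|v-v'\|\Bigr|\le\bigl\|f^*(v)-f(v)\bigr\|+\bigl\|f^*(v')-f(v')\bigr\|+\Bigl|\bigl\|f(v)-f(v')\bigr\|-\|v-v'\|\Bigr|\le2\e+2\e+\e=5\e.
$$
To reach the stated $4\e$, I would sharpen the closeness bound so that the two error terms together with $\dis f$ total $4\e$. One option is to shrink the supports of the partition of unity to radius $\e/2$. Then $\bigl\|f(x)-f(v)\bigr\|\le\e/2+\e$, giving closeness $\tfrac32\e\le2\e$ and distortion $\le\tfrac32\e+\tfrac32\e+\e=4\e$. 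Both claimed bounds then hold. The main obstacle I expect is precisely this bookkeeping of constants, together with justifying local finiteness of the partition of unity in infinite-dimensional $V$, which I would settle by citing Stone's theorem on paracompactness of metric spaces.
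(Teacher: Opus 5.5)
The paper gives no proof of this lemma; it only cites Renz's Lemma~2.8 in R.D.~Bourgin's paper. So there is nothing to compare against, and your argument has to stand on its own. It does: your final version is a correct proof.

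Here is that final version, stated cleanly. Take a locally finite partition of unity $\{\psi_\alpha\}$ subordinate to the cover of $V$ by open balls of radius $\e/2$; it exists because metric spaces are paracompact. For each $\alpha$, choose a centre $x_\alpha$ with $\{\psi_\alpha>0\}\ss B(x_\alpha,\e/2)$, and put $f^*(v)=\sum_\alpha\psi_\alpha(v)f(x_\alpha)$.

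\begin{itemize}
\item $f^*$ is continuous, being locally a finite sum of continuous maps.
\item Whenever $\psi_\alpha(v)>0$, you have $\|f(x_\alpha)-f(v)\|<\tfrac12\e+\e$. By convexity of the norm, $\|f^*(v)-f(v)\|<\tfrac32\e\le2\e$.
\item By the triangle inequality, $\dis f^*\le\tfrac32\e+\tfrac32\e+\e=4\e$.
\end{itemize}

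Nothing here uses continuity of $f$, so it covers an arbitrary $\e$-isometry, as the statement requires.

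Two points to tidy up in a written version.

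\begin{itemize}
\item Discard the maximal $\e$-separated net entirely and start directly from Stone's theorem with radius $\e/2$. As you noticed, the net construction is not locally finite in infinite dimensions. It also only reaches $5\e$ with radius $\e$, so it adds nothing.
\item Treat $\e=0$ separately. Your construction needs a positive radius, but in that case $f$ is an isometry, hence $1$-Lipschitz and continuous, so $f^*=f$ works.
\end{itemize}
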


\begin{lemma}\label{lemma:2}
Let $V$ and $W$ be normed linear spaces such that $\min\bigl\{\dim V,\dim W\bigr\}<\infty$, and let $f\:V\to W$ be a mapping such that $\dis f<\infty$. Then $\dim V\le\dim W$.
\end{lemma}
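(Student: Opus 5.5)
Let $\e=\dis f<\infty$. I will argue by contradiction: assume $\dim V>\dim W$. Since the minimum is finite, this forces $\dim W=n<\infty$, and $V$ then contains a subspace $V_0$ of dimension $n+1$. Restricting $f$ to $V_0$ does not increase the distortion, so from now on I assume $\dim V=n+1$ and $\dim W=n$. By Lemma~\ref{lemma:1} (Renz) I replace $f$ by a continuous $4\e$-isometry $f^*$ lying within $2\e$ of $f$. Translating, I may also assume $f^*(0)=0$; this costs at most another $\e$ in the distortion.

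The key tool is a rescaling that turns the coarse $\e$-isometry into a genuine, continuous isometric-type object. For $t>0$ put $f_t(v)=t^{-1}f^*(tv)$. Then $\dis f_t\le 4\e/t$, each $f_t$ is continuous, and $\|f_t(v)\|\le\|v\|+4\e/t$. So on the unit sphere $S_V$, or on the ball $B_V$, the maps $f_t$ are uniformly bounded. They need not be equicontinuous, since they are only $4\e/t$-isometries. I will avoid Arzel\`a--Ascoli and work directly with a topological argument at a fixed large $t$.

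\textbf{Topological step.} Take the unit sphere $S_V\cong S^n$ and the continuous map $f_t\:S_V\to W\cong\R^n$. By the Borsuk--Ulam theorem there is $v\in S_V$ with $f_t(v)=f_t(-v)$. On the other hand,
\[
\bigl\|f_t(v)-f_t(-v)\bigr\|\ge\|v-(-v)\|-\dis f_t\ge 2-4\e/t>0
\]
for $t>2\e$. This is a contradiction, hence $\dim V\le\dim W$.

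The main obstacle is only making sure continuity is available, so that Borsuk--Ulam applies. Renz's lemma provides exactly this, and the translation and rescaling preserve continuity. If one preferred to avoid Lemma~\ref{lemma:1}, an alternative is to work with a fine net on a large sphere and a piecewise-linear extension, but invoking Lemma~\ref{lemma:1} is the cleanest route. Note also that the normalization $f^*(0)=0$ is not actually needed for the Borsuk--Ulam step, which uses only the distortion bound.
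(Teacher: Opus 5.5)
Your proof is correct and is essentially the paper's argument: continuity via Renz's Lemma~\ref{lemma:1}, then Borsuk--Ulam on a sphere of arbitrarily large radius, with the infinite-dimensional case reduced to a subspace of dimension $\dim W+1$; your rescaled maps $f_t(v)=t^{-1}f^*(tv)$ on the unit sphere are just a normalized way of applying Borsuk--Ulam to $f^*$ on the sphere of radius $t$. One minor point: translating by $-f^*(0)$ is an isometry of $W$, so it leaves the distortion unchanged and costs no extra $\varepsilon$.
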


\begin{proof}
By Lemma~\ref{lemma:1} we can assume that $f$ is continuous. Suppose that $\infty>\dim V>\dim W$. Then by the Borsuk--Ulam theorem, for every $R>0$ there exists $v\in V$ such that $\|v\|=R$ and $f(v)=f(-v)$. It follows that $\dis f\ge\Bigl|\bigl\|f(v)-f(-v)\bigr\|-\bigl\|v-(-v)\bigr\|\Bigr|=2R$. A contradiction.

Let $\infty=\dim V>\dim W$. Take a subspace $L\ss V$ such that $\dim L=\dim W+1$. Then, according to the case already discussed, $\dim W\ge\dim L>\dim W$.
\end{proof}

\begin{lemma}[{DallasWebster~\cite[Proposition 4.1]{B75}}]\label{lemma:3}
Let $V$ and $W$ be normed linear spaces, $f\:V\to W$ be a continuous mapping, and $\dim W<\infty$, $\dim W\le\dim V$, $\dis f<\infty$. Then $\dim W=\dim V$ and $f$ is a surjective mapping.
\end{lemma}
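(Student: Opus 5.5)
First, $\dim V\le\dim W<\infty$: by Lemma~\ref{lemma:2} (whose hypothesis $\min\{\dim V,\dim W\}<\infty$ holds because $\dim W<\infty$), together with the assumption $\dim W\le\dim V$, this gives $\dim V=\dim W=n$. We may therefore fix linear identifications $V\cong\R^n\cong W$. All norms on $\R^n$ are equivalent, so topological statements such as degree and properness can be transferred freely between the two spaces.

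For surjectivity I plan a degree argument. Set $D=\dis f<\infty$ and $g(v)=f(v)-f(0)$. Then
$$
\bigl|\,\|g(v)\|-\|v\|\,\bigr|\le D ,
$$
so $g$ is proper and, for large $R$, the sphere $S_R=\{\|v\|=R\}$ is mapped into $W\sm\{0\}$. The goal is to show that $g|_{S_R}\:S_R\to W\sm\{0\}$ has nonzero degree; standard degree theory then shows that $g$ attains every point of a ball of radius about $R-D$. Since $R$ is arbitrary, $g$, and hence $f$, is surjective.

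The main obstacle is proving the nonzero degree. I would argue by contradiction. If the degree were zero, then the normalized map $v\mapsto g(v)/\|g(v)\|$ on $S_R$ would extend to a map $B_R\to S^{n-1}_W$, and I need to derive a contradiction with the distance control. Following Borsuk--Ulam, I would first compare $g(v)$ and $g(-v)$ for $\|v\|=R$:
$$
\|g(v)-g(-v)\|\ge 2R-D .
$$
Since $\|g(\pm v)\|\le R+D$, the two vectors are nearly opposite. Precisely, the normalized vectors $u=g(v)/\|g(v)\|$ and $u'=g(-v)/\|g(-v)\|$ satisfy $\|u-u'\|\ge 2-O(D/R)$. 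In particular $u'\ne u$ for large $R$, so the map $v\mapsto g(v)/\|g(v)\|$ is homotopic, through non-vanishing maps, to an odd map; the homotopy is $\tfrac12(g(v)-g(-v))$ via the straight line from $g(v)$.

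To check that this line avoids $0$, I need that $t g(v)+(1-t)\bigl(g(v)-g(-v)\bigr)/2$ is never zero. If it vanished, $g(v)$ and $g(-v)$ would be positively proportional, which forces $\|g(v)-g(-v)\|\le \bigl|\|g(v)\|-\|g(-v)\|\bigr|\le 2D$. This contradicts the bound $2R-D$ once $R$ is large.

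By Borsuk's theorem, an odd map $S^{n-1}\to S^{n-1}$ has odd degree, hence nonzero degree, and this completes the argument. In this way the same Borsuk--Ulam mechanism used in Lemma~\ref{lemma:2} yields both the equality of dimensions and the surjectivity.
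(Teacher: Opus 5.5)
Your proof is correct. It follows the paper for the dimension count and differs from it in how surjectivity is obtained.

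\textbf{Dimension.} Like the paper, you get $\dim V=\dim W$ from Lemma~\ref{lemma:2} together with the hypothesis $\dim W\le\dim V$.

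\textbf{Surjectivity.} The paper does not prove this step; it simply invokes Webster's result \cite[Proposition 4.1]{B75}. You prove it directly:
\begin{enumerate}
\item For $R>3D/2$ the straight-line homotopy $\tfrac{1+t}{2}g(v)-\tfrac{1-t}{2}g(-v)$ never vanishes on $S_R$. A zero would make $g(v)$ and $g(-v)$ positively proportional. That forces $\|g(v)-g(-v)\|\le 2D$, which contradicts $\|g(v)-g(-v)\|\ge 2R-D$.
\item Hence $\deg(g,B_R,0)$ equals the degree of the odd map $\tfrac12\bigl(g(v)-g(-v)\bigr)$. By Borsuk's theorem that degree is odd, so it is nonzero.
\item Moving the target point to any $w$ with $\|w\|<R-D$ leaves the degree unchanged, since $\|g(v)-tw\|\ge R-D-\|w\|>0$ on $S_R$. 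This gives the explicit inclusion $f\bigl(B_V(0,R)\bigr)\supseteq B_W\bigl(f(0),R-D\bigr)$.
\end{enumerate}

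\textbf{What each route buys.} The citation buys brevity. Your route is self-contained and uses only the Borsuk-type mechanism already used in Lemma~\ref{lemma:2}, which fits the spirit of Remark~\ref{rk:equivalent}. It also yields a quantitative surjectivity estimate.

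\textbf{Cosmetic points.}
\begin{enumerate}
\item The ``by contradiction'' framing, with an extension $B_R\to S^{n-1}_W$, is never used; your argument is direct.
\item For $n=1$, ``the degree of a map $S^{0}\to S^{0}$'' should be read as the Brouwer degree $\deg(g,B_R,0)$, or replaced by the intermediate value theorem.
\end{enumerate}
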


\begin{proof}
According to Lemma~\ref{lemma:2} we have $\dim V=\dim W$ and then we apply~\cite[Pro\-po\-si\-tion 4.1]{B75}.
\end{proof}

\begin{thm}\label{thm:equivalentfinite2}
For a normed space $V$ and a finite-dimensional normed space $W$, the following conditions are equivalent\/\rom:
\begin{enumerate}
\item\label{thm:equivalentfinite2:1} the spaces $V$ and $W$ are linearly isometric\/\rom;
\item[$(3'')$]\namedlabel{thm:equivalentfinite2:3''}{(3'')} $\dim W\le\dim V$ and there exists a mapping $f\:V\to W$ such that $\dis f<\infty$.
\end{enumerate}
\end{thm}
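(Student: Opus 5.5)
The implication (\ref{thm:equivalentfinite2:1})\Lra$\ref{thm:equivalentfinite2:3''}$ is immediate. A linear isometry $U\:V\to W$ forces $\dim V=\dim W$, and $f=U$ has $\dis f=0$.

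For the converse, I will show that $d_{GH}(V,W)<\infty$ and then apply Theorem~\ref{thm:equivalentfinite}, implication $\ref{thm:equivalentfinite:4'}$\Lra(\ref{thm:equivalentfinite:1}). Its proof goes through Proposition~\ref{prop:equivalent} and then (\ref{thm:equivalentfinite:4})\Lra(\ref{thm:equivalentfinite:9})\Lra(\ref{thm:equivalentfinite:5}), and finishes with Mankiewicz's theorem, which needs no $\e$-isometry theory.

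\emph{Step 1: make $f$ continuous and surjective.} Let $\e=\dis f<\infty$. Lemma~\ref{lemma:1} gives a continuous $4\e$-isometry $f^*\:V\to W$ with $\|f^*(v)-f(v)\|\le 2\e$. We have $\dim W<\infty$ and $\dim W\le\dim V$, so Lemma~\ref{lemma:3} applies to $f^*$. It yields $\dim V=\dim W$ and that $f^*$ is surjective.

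\emph{Step 2: build a quasi-inverse.} For each $w\in W$ choose $g(w)\in (f^*)^{-1}(w)$, which is possible by surjectivity. Then $f^*(g(w))=w$. For $w,w'\in W$,
$$
\Bigl|\bigl\|g(w)-g(w')\bigr\|-\|w-w'\|\Bigr|=\Bigl|\bigl\|g(w)-g(w')\bigr\|-\bigl\|f^*(g(w))-f^*(g(w'))\bigr\|\Bigr|\le4\e,
$$
so $\dis g\le4\e$. Similarly, for $v\in V$ and $w\in W$,
$$
\Bigl|\bigl\|f^*(v)-w\bigr\|-\bigl\|v-g(w)\bigr\|\Bigr|=\Bigl|\bigl\|f^*(v)-f^*(g(w))\bigr\|-\bigl\|v-g(w)\bigr\|\Bigr|\le4\e,
$$
so $\codis(f^*,g)\le4\e$. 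By formula~(\ref{eq:5}), $d_{GH}(V,W)\le2\e<\infty$, which is condition $\ref{thm:equivalentfinite:4'}$.

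\emph{Step 3: conclude.} Theorem~\ref{thm:equivalentfinite} now gives that $V$ and $W$ are linearly isometric.

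The substantive input is entirely the surjectivity in Lemma~\ref{lemma:3}, i.e.\ Webster's degree-type argument, and its hypothesis $\dim W\le\dim V$ is exactly what $\ref{thm:equivalentfinite2:3''}$ supplies. Once surjectivity is available the rest is bookkeeping. The only point needing care is that $g$ is defined by an arbitrary choice of preimages, which is permitted since~(\ref{eq:5}) imposes no continuity on $g$.
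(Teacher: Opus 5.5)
Your proof is correct. It agrees with the paper up to the point where you have a continuous surjective $4\e$-isometry $f^*$ from Lemma~\ref{lemma:1} and Lemma~\ref{lemma:3}, and it differs only in how you finish.

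\begin{itemize}
\item The paper finishes by applying Gruber's approximation theorem \cite[Theorem 3]{G78} directly to the surjective $\e$-isometry. That theorem produces a linear surjective isometry which, after the normalization $f(0)=0$, lies within bounded distance of the given map.
\item You use surjectivity only to pick a set-theoretic section $g$ of $f^*$. This gives $\dis g\le4\e$ and $\codis(f^*,g)\le4\e$, hence $d_{GH}(V,W)\le2\e$. You then feed this into the implication $(4')\Rightarrow(1)$ of Theorem~\ref{thm:equivalentfinite}. That implication rests on the scaling argument of Proposition~\ref{prop:equivalent}, compactness of the unit balls, and Mankiewicz's theorem.
\end{itemize}

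Each route buys something different. The paper's route yields more: a linear isometry uniformly close to $f$, the Hyers--Ulam-type conclusion. The price is the full approximation machinery for $\e$-isometries. Your route yields only the existence of some linear isometry, unrelated to $f$. In exchange, it shows that Theorem~\ref{thm:equivalentfinite2} is a formal corollary of Theorem~\ref{thm:equivalentfinite} together with the two lemmas. It avoids approximation theorems altogether, which fits the point made in Remark~\ref{rk:equivalent}.

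One small caveat concerns the proof of Theorem~\ref{thm:equivalentfinite} that you rely on. It never states $(5)\Rightarrow(1)$ explicitly. That step is Mankiewicz's theorem as cited in Theorem~\ref{thm:equivalent}, and it applies here because $V$ is finite-dimensional (indeed $\dim V=\dim W$ by your Step~1), hence Banach.
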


\begin{proof}
The implication (\ref{thm:equivalentfinite2:1})\Lra$\ref{thm:equivalentfinite2:3''}$ is obvious.

$\ref{thm:equivalentfinite2:3''}$\Lra(\ref{thm:equivalentfinite2:1}). By Lemma~\ref{lemma:1} we can assume that the mapping $f$ is continuous. By Lemma~\ref{lemma:3} $\dim W=\dim V$ and the mapping $f$ is surjective. Further, we apply~\cite[Theorem 3]{G78}.
\end{proof}

\begin{rk}\label{rk:isometry}
Note that all the results presented here implicitly assume that the problem is to find (construct) a surjective linear isometry. However, four problems are natural:
\begin{enumerate}
\item finding a surjective approximating isometry;
\item finding a surjective isometry;
\item finding (linear) approximating isometry;
\item finding a (linear) isometric embedding.
\end{enumerate}
There are separate tasks:
\begin{enumerate}
\setcounter{enumi}{4}
\item finding conditions that imply the linearity of isometry;
\item\label{rk:isometry:6} finding conditions for the existence of the Figiel operator $T$~\cite{F68} and the existence of its inverse linear operator (isometry).
\end{enumerate}
Here it is appropriate to recall the problem of A.D. Alexandrov~\cite{A70, RS93}.
\begin{enumerate}
\setcounter{enumi}{6}
\item When is every mapping $f\:X\to Y$ (not necessarily continuous) that preserves distance $1$ an isometry?
\end{enumerate}

In the paper~\cite{CDZ13} four other problems are formulated, distinguished by the properties of a given $\e$-isometry:
\begin{enumerate}
\item $f$ is surjective and $\e=0$;
\item $f$ is not surjective and $\e=0$;
\item $f$ is surjective and $\e\ne0$;
\item $f$ is not surjective and $\e\ne0$.
\end{enumerate}
\end{rk}

In 1953, F.S.~Beckman and D.A.~Quarles, Jr.~obtained the following result, which has already become a textbook example. \emph{A transformation of the Euclidean $n$-space $E^n$ \($2\le n<\infty$\) that preserves a unique nonzero length must be a Euclidean\/ \(rigid\/\) motion of $E^n$.} Isometric property follows from the preservation of the unit area of a triangle~\cite[Theorem 5]{BBF01}. An excellent review of isometricity criteria was given by J.A.~Lester~\cite{L95}. As a solution to a weakened version of Alexandrov's problem, we cite a beautiful result of A.~Fogt~\cite{F73}. \emph{Let $V$ and $W$ be two normed spaces, $f\:V\to W$, $f(0)=0$, be a continuous surjective mapping preserving distance equalities\/ \(i.e., $\|xy\|=\|zw\|$ implies $\bigl\|f(x)-f(y)\bigr\|=\bigl\|f(z)-f(w)\bigr\|$\). Then $f=\l U$, where $\l$ is a nonzero real number, and $U$ is a linear isometry of $V$ onto $W$.}

We already noted in the first paragraph that every isometry $f\:V\to W$ (not necessarily surjective) into a strictly convex normed space $W$ is affine. The fundamental result in the non-surjective case was obtained by T.~Figiel~\cite{F68}. \emph{Let $V$ and $W$ be two Banach spaces, $f\:V\to W$, $f(0)=0$, be an isometry. Then there exists a unique bounded linear operator $T(f)\:\overline{\operatorname{span}}\bigl(f(V)\bigr)\to V$ with $\bigl\|T(f)\bigr\|=1$ such that $T(f)\circ f=\Id_V$}.

In 2003, G.~Godefroy and N.~Kalton~\cite{GK03, G10} explored the relationship between isometry and linear isometry (cf\. Question~(\ref{rk:isometry:6}) of Remark~\ref{rk:isometry}), and proved the following deep theorem: \emph{Let $V$ and $W$ be two Banach spaces, $f\:V\to W$, $f(0)=0$ be an isometry, and $T$ be the Figel operator of $f$.
\begin{enumerate}
\renewcommand{\theenumi}{\Roman{enumi}}
\item If $V$ is a separable Banach space, then there exists a linear isometry $S\:V\to\overline{\operatorname{span}}\bigl(f(V)\bigr)$ such that $T\circ S=\Id_V$ and $W$ contains a set of well-complemented subspaces that are linearly isometric to $V$.
\item If $V$ is a non-separable weakly compactly generated Banach space, then there exists a Banach space $W$ such that there exists
nonlinear isometry $f\:V\to W$, but $V$ is not linearly isomorphic to any subspace of $W$.
\end{enumerate}
}

A similar question was studied by Y.~Dutrieux  and G.~Lancien. We will cite one of the questions they posed~\cite[Question (3) on p. 500]{DL08}.

\begin{pr}\label{pr:embedding}
If two separable Banach spaces have the same compact\/ \(finite\/\) subsets up to isometry, then are they isometrically embedded in each other\/\rom?
\end{pr}

For finite-dimensional spaces, the answer is positive.

\begin{prop}\label{prop:finite}
Let $\bigl(V,\|\cdot \|_V\bigr)$ and $\bigl(W,\|\cdot \|_W\bigr)$ be normed spaces such that $\dim W<\infty$ and each finite subset $S=\{v_0,v_1,\ldots,v_m\}\ss V$ with $ \|v_0-v_1\|_V=\ldots=\|v_0-v_m\|_V=1$ can be isometrically embedded into $W$. Then $\dim V\le\dim W$.
\end{prop}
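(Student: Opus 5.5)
The plan is to argue by contradiction. We combine a compactness argument, which passes from isometric embeddings of finite sets to an isometric embedding of a whole sphere, with the Borsuk--Ulam theorem, exactly as in the proof of Lemma~\ref{lemma:2}. Let $n=\dim W<\infty$ and suppose $\dim V>n$. Choose a linear subspace $L\ss V$ with $\dim L=n+1$, and let $S_L=\{v\in L:\|v\|_V=1\}$ be its unit sphere. Since $S_L$ is compact and metrizable, it has a countable dense subset $D=\{d_1,d_2,\ldots\}$. Put $D_k=\{d_1,\ldots,d_k\}$.

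For each $k$, the finite set $S_k=\{0,d_1,\ldots,d_k\}$ has the form required in the hypothesis, with $v_0=0$ and $\|v_0-d_i\|_V=1$. So there is an isometric embedding $\varphi_k\:S_k\to W$. Composing with a translation of $W$, which is an isometry, we may assume $\varphi_k(0)=0$. Then $\|\varphi_k(d_i)\|_W=1$ for all $i\le k$, so every image lies in the unit sphere $S_W$ of $W$. Because $\dim W<\infty$, the sphere $S_W$ is compact.

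Next we take a limit by a diagonal argument. For each fixed $i$, the sequence $\bigl(\varphi_k(d_i)\bigr)_{k\ge i}$ lies in the compact set $S_W$. Passing successively to subsequences and then taking the diagonal, we obtain a subsequence $k_j$ such that $\varphi_{k_j}(d_i)\to\psi(d_i)$ for every $i$. Each $\varphi_k$ preserves distances on $D_k$, so in the limit $\|\psi(d_i)-\psi(d_l)\|_W=\|d_i-d_l\|_V$ for all $i,l$. Thus $\psi\:D\to W$ is an isometry, hence uniformly continuous. It therefore extends uniquely, by density of $D$ in $S_L$ and completeness of $W$, to an isometric map $\psi\:S_L\to W$, which is in particular continuous. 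This limiting step is the only place requiring care, and compactness of $S_W$ (from $\dim W<\infty$) is exactly what makes it work.

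Finally, $S_L$ is homeomorphic to the sphere $S^n$, and $W$ is linearly homeomorphic to $\R^n$. By the Borsuk--Ulam theorem, the continuous map $\psi\:S_L\to W$ identifies some pair of antipodal points: there is $v\in S_L$ with $\psi(v)=\psi(-v)$. But $\psi$ is an isometry, so
$$0=\bigl\|\psi(v)-\psi(-v)\bigr\|_W=\|v-(-v)\|_V=2,$$
a contradiction. Hence $\dim V\le\dim W$.
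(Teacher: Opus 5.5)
Your proof is correct and takes essentially the paper's route: pass to a $(\dim W+1)$-dimensional subspace, embed $0$ together with finite pieces of its unit sphere into $S_W$, use compactness of $S_W$ to get an isometric copy of the whole sphere inside $W$, and contradict Borsuk--Ulam. The only difference is the compactness step. The paper takes Hausdorff limits of the images of $1/n$-nets and invokes the fact that compacta at Gromov--Hausdorff distance zero are isometric, whereas your diagonal extraction over a nested countable dense set builds the isometry $\psi$ directly (and since the radial homeomorphism $S_L\cong S^n$ is odd, Borsuk--Ulam applies as you use it).
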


\begin{proof}
Suppose that $\dim V>\dim W$. Consider a subspace $Z$ of $V$ such that $\dim Z=\dim W+1$. For every positive integer $n$, fix an $n^{-1}$-net $S(n)$ in the unit sphere $S_Z$ of $Z$. Suppose that for every $n\in\N$, the finite subset $Z(n)=S(n)\cup\{0_Z\}\ss Z$ is isometrically embedded into $W$: $i_n\:Z(n)\to W$. Applying the shift by the vector $\overrightarrow{i_n(0_Z)0_W}$, we can assume without loss of generality that $i_n(0_Z)=0_W$. Then $i_n\bigl(S(n)\bigr)$ is a finite subset of the unit sphere $S_W$. The resulting sequence of compact subsets of the sphere $S_W$ forms a sequence of points in the compact space of compact subsets $S_W$ with Hausdorff distance $d_H$. We choose a convergent subsequence $\lim_{i\to\infty}i_{n_i}\bigl(S(n_i)\bigr)=K\ss S_W$. Then
\begin{multline*}
d_{GH}(S_Z,K)\le d_{GH}\bigl(S_Z,S(n_i)\bigr)+d_{GH}\bigl(i_{n_i}\bigl(S(n_i)\bigr),K\bigr)\le\\
\le n_i^{-1}+d_{GH}\bigl(i_{n_i}\bigl(S(n_i)\bigr),K\bigr)\to_{i\to\infty}0.
\end{multline*}
The resulting chain of inequalities means that $d_{GH}(S_Z,K)=0$. Thus, the compact sets $S_Z$ and $K\ss W$ are isometric, which contradicts Borsuk's theorem.
\end{proof}

\begin{thm}\label{thm:equivalentfinitedistance}
For a normed space $V$ and a finite-dimensional normed space $W$, the following conditions are equivalent\/\rom:
\begin{enumerate}
\item\label{thm:equivalentfinitedistance:1} the spaces $V$ and $W$ are linearly isometric\rom;
\item[$(5')$]\namedlabel{thm:equivalentfinitedistance:5'}{(5')} $\dim W\le\dim V$ and there is an isometric embedding $f\:B_V\to W$\rom;
\item[$(10)$]\namedlabel{thm:equivalentfinitedistance:10}{(10)} the spaces $V$ and $W$ have the same compact subsets up to isometry\rom;
\item[$(10')$]\namedlabel{thm:equivalentfinitedistance:10'}{(10')} $\dim W\le\dim V$ and each finite subset of $V$ isometrically is invested in $W$.
\end{enumerate}
\end{thm}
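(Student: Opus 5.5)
The plan is to prove the cycle of implications (1)\Lra(10)\Lra$(10')$\Lra$(5')$\Lra(1), with (1)\Lra(10) obvious. Throughout, $W$ is finite-dimensional.

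(10)\Lra$(10')$. Finite subsets are compact, so every finite subset of $V$ embeds isometrically into $W$. By Proposition~\ref{prop:finite}, this gives $\dim V\le\dim W<\infty$. Symmetrically, every finite subset of $W$ embeds isometrically into $V$, and $V$ is now known to be finite-dimensional, so Proposition~\ref{prop:finite} with the roles of $V$ and $W$ exchanged gives $\dim W\le\dim V$.

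$(10')$\Lra$(5')$. Proposition~\ref{prop:finite} again gives $\dim V\le\dim W$, so $\dim V=\dim W<\infty$ and $B_V$ is compact. To embed $B_V$ we use a compactness argument of the same kind as in the proof of Proposition~\ref{prop:finite}:
\begin{enumerate}
\item Take finite $n^{-1}$-nets $S(n)\ss B_V$ with $0_V\in S(n)$, increasing in $n$, and isometric embeddings $i_n\:S(n)\to W$ normalised so that $i_n(0_V)=0_W$. Then every $i_n$ maps into the compact set $B_W$.
\item Take a countable dense set $Q=\bigcup_n S(n)\ss B_V$. A diagonal argument gives a subsequence along which $i_n(q)$ converges for every $q\in Q$.
\item The limit map on $Q$ is an isometry; it extends uniquely to an isometry $f\:B_V\to W$ by completeness of $W$.
\end{enumerate}
An alternative route is to note that $d_{GH}(B_V,K)=0$ for a limit compact $K\ss W$, so $B_V$ is isometric to $K$; either argument suffices.

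$(5')$\Lra(1). This is the step I expect to be the main obstacle, since $f$ is defined only on the ball. Because $B_V$ is compact and $\dim V=\dim W$ are finite, invariance of domain shows that $f$ maps the open ball $\operatorname{int} B_V$ onto an open subset of $W$. Now extend by scaling: by translation assume $f(0)=0$ and set $F_R(v)=Rf(v/R)$ on $RB_V$. Each $F_R$ is an isometric embedding of $RB_V$, fixing $0$, into $W$. The family is equicontinuous, so a diagonal compactness argument as $R\to\infty$ along integers gives an isometric embedding $F\:V\to W$ with $F(0)=0$. In particular $\dis F=0<\infty$ and $\dim W\le\dim V$, so condition $(3'')$ of Theorem~\ref{thm:equivalentfinite2} holds, and that theorem gives that $V$ and $W$ are linearly isometric.

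The compactness in the last step works because $F_R(v)$ lies in the ball of radius $\|v\|$ in $W$, which is compact for each fixed $v$. This is what makes pointwise limits along a dense countable set exist, so I expect no further difficulty there.
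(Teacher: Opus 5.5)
\textbf{The gap.} It sits at the start of $(5')\Rightarrow(1)$. You write that ``$\dim V=\dim W$ are finite'', but $(5')$ only gives $\dim W\le\dim V$. In a cycle this implication has to be proved from $(5')$ alone, and the reverse inequality is exactly the dimensional content of this step.

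Your argument genuinely needs it. The diagonal extraction as $R\to\infty$ runs over a countable dense subset of $V$, and such a set need not exist if $V$ is, a priori, a non-separable infinite-dimensional space. The repair is the one the paper uses. If $\dim V>\dim W$, choose $L\subset V$ with $\dim L=\dim W+1$. Then $f$ restricted to the unit sphere of $L$ is an injective continuous map of a $(\dim W)$-dimensional sphere into $W\cong\mathbb{R}^{\dim W}$, contradicting Borsuk--Ulam.

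Alternatively, take a cluster point of the maps $F_R$, extended by $0$ outside $RB_V$, in the compact product $\prod_{v\in V}B_W(0,\|v\|)$. This gives an isometric $F\colon V\to W$ for arbitrary $V$, and Theorem~\ref{thm:equivalentfinite2}, through Lemma~\ref{lemma:2}, then supplies the dimension equality by itself.

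The rest of your proposal is sound. Your $(10)\Rightarrow(10')$, which applies Proposition~\ref{prop:finite} in both directions, is a valid substitute for the paper's direct $(10)\Rightarrow(5')$. The paper instead applies Borsuk--Ulam to isometric copies of $B_W$ in $V$ and of $B_L$ in $W$.

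\textbf{Comparison with the paper.} Once repaired, your $(5')\Rightarrow(1)$ takes a different route. The paper uses Borsuk--Ulam to show that $f$ maps $S_V(0,1)$ onto $S_W(f(0),1)$, hence $B_V$ onto a unit ball of $W$, and then quotes Mankiewicz's theorem. You instead extend $f$ by homogeneity to a global isometric embedding $F$ and quote Theorem~\ref{thm:equivalentfinite2}. That theorem rests on Webster's surjectivity result and Gruber's approximation theorem, which is heavier than needed.

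The invariance of domain you invoke and then never use already finishes the job once $\dim V=\dim W<\infty$. The image $F(V)$ is open by invariance of domain and closed because it is complete, so $F(V)=W$. The Mazur--Ulam theorem then makes the surjective isometry $F$ with $F(0)=0$ linear. In that form your argument needs only Borsuk--Ulam, invariance of domain and Mazur--Ulam, and is more elementary than the paper's appeal to Mankiewicz.
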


\begin{proof}
The implications (\ref{thm:equivalentfinitedistance:1})\Lra$\ref{thm:equivalentfinitedistance:10}$ and (\ref{thm:equivalentfinitedistance:1})\Lra$\ref{thm:equivalentfinitedistance:5'}$\Lra$\ref{thm:equivalentfinitedistance:10'}$ are obvious.

$\ref{thm:equivalentfinitedistance:10}$\Lra$\ref{thm:equivalentfinitedistance:5'}$. Let $f\:B_W\to V$ be an isometric embedding of the compact set $B_W$. By the Borsuk--Ulam theorem, $\dim W\le\dim V$. Let $\dim W<\dim V$. Consider a subspace $L\ss V$ such that $\dim L=\dim W+1$. Let $f\:B_L\to W$ be an isometric embedding of the compact set $B_L$. By the Borsuk--Ulam theorem, $\dim L\le\dim W=\dim L-1$. The obtained contradiction shows that $\dim W=\dim V$ and, therefore, there exists an isometric embedding $g\:B_V\to W$ of the compact set $B_V$.

$\ref{thm:equivalentfinitedistance:5'}$\Lra(\ref{thm:equivalentfinitedistance:1}). Let $\dim W<\dim V$. Consider a subspace $L\ss V$ such that $\dim L=\dim W+1$. By the Borsuk--Ulam theorem, applied to the isometric embedding $f_{|B_L}\:B_L\to W$, the inequality $\dim L\le\dim W=\dim L-1$ holds. The resulting contradiction shows that $\dim W=\dim V$. Under the isometry $f\:B_V\to W$, the unit sphere $S_V(0,1)$ with center $0\in V$ is mapped to the unit sphere $S_W\bigl(f(0),1\bigr)$ with center $f(0)\in W$: $f\bigl(S_V(0,1)\bigr)\ss S_W\bigl(f(0),1\bigr)$. It follows from the Borsuk--Ulam theorem that $f\bigl(S_V(0,1)\bigr)=S_W\bigl(f(0),1\bigr)$. This means that $f\bigl(B_V(0,1)\bigr)=B_W\bigl(f(0),1\bigr)$ and by Mankiewicz's theorem (by the fulfillment of condition $\ref{thm:equivalent:5}$ of the theorem~\ref{thm:equivalent} and condition $\ref{thm:equivalentfinite:5}$ of the theorem~\ref{thm:equivalentfinite}) the spaces $V$ and $W$ are linearly isometric.

$\ref{thm:equivalentfinitedistance:10'}$\Lra$\ref{thm:equivalentfinitedistance:5'}$. By Proposition~\ref{prop:finite}, $\dim W=\dim V$. Therefore, the unit ball $B_V$ is compact and, by repeating the proof of Proposition~\ref{prop:finite}, we construct an isometric embedding $f\:B_V\to W$.
\end{proof}

In relation with Proposition~\ref{prop:finite}, for $\dim V>\dim W$, natural questions arise of describing minimal finite subsets of $V$ that are not embeddable in $W$ and of estimating the distances of such subsets to subsets of $W$. A metric space $S$ is called \emph{equilateral\/} (\emph{regular simplex\/}) if every pair of points in $S$ is equidistant. K.M.~Petty proved~\cite[Theorem 4]{P71} that the cardinality of a regular simplex in an $n$-dimensional normed space does not exceed $2^n$. Therefore, in the normed space $\ell_\infty^{n+1}$ there is a $(2^n+1)$-vertex simplex that is not isometrically embeddable in any $n$-dimensional normed space.

Note that M.~Gromov~\cite[Theorem 3.3]{G07} showed that compact metric spaces are isometric if and only if their finite subsets are the same up to isometry. F.~Memoli showed~\cite[Theorem 5.1]{M12} that the modified Gromov--Hausdorff distance between compact metric spaces is determined by the distances between their finite subsets. P.J.~Olver established~\cite{O01} that smooth plane curves $X$ and $Y$ are rigidly isometric if and only if they have the same four-point subsets up to isometry. P.J.~Olver also proved that two smooth surfaces $X$ and $Y$ embedded in $\R^3$ are rigidly isometric if and only if they have the same four-point subsets up to isometry. M.~Boutin and G.~Kemper showed~\cite{BK04, BK07} that the distance distribution and the triangle distribution metrically characterize smooth plane curves.

In the papers~\cite{FSV02},~\cite[Theorem 3.3]{ZZL16} very simple and almost necessary conditions for the linearity of a non-surjective mapping are proposed.

We fix some norm $\|\cdot\|$ on a real two-dimensional vector space $\R^2$, and for a normed space $V$ we define the norm on $W_V=V\oplus\R$ as follows: $\bigl\|(v,t)\bigr\|_W=\Bigl\|\bigl(\|v\|_V,t\bigr)\Bigr\|$. In many papers~\cite{HU45},~\cite[Theorem 2]{G78},~\cite{FSV02} it is shown that for specially chosen functions $\varphi\:[0,\infty)\to[0,\infty)$, the mapping $f_\varphi\:V\to W_V$ given by the formula $f_\varphi(v)=\bigl(v,\varphi(\|v\|)\bigr)$ is an $\e$-isometry for some $\e\ge0$.

\begin{prop}\label{prop:isometry2}
For every positive $\e$ there exists a continuous $\e$-isometry $f_\e\:\ell_2\to\ell_2$ such that $d_H\bigl(f_\e(\ell_2),\,L\bigr)=\infty$ for every affine subspace $L\ss\ell_2$.
\end{prop}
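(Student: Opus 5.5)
We use the construction $f_\varphi(v)=\bigl(v,\varphi(\|v\|)\bigr)$ described just before the statement. Take $V=\ell_2$ and the Euclidean norm on $\R^2$, so that $W_V=\ell_2\oplus\R$ with $\bigl\|(v,t)\bigr\|^2=\|v\|^2+t^2$. Fix a linear isometry $J\:\ell_2\oplus\R\to\ell_2$, for instance $(v,t)\mapsto(t,v_1,v_2,\ldots)$. We will put
$$
f_\e=J\circ f_\varphi,\qquad \varphi(t)=c\log(1+t),
$$
with a constant $c>0$ chosen in terms of $\e$. Continuity of $f_\e$ is clear, so two things remain: bounding the distortion, and showing that the image stays infinitely far from every affine subspace.

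\emph{Distortion.} Let $a=\|v-w\|$ and $b=\bigl|\varphi(\|v\|)-\varphi(\|w\|)\bigr|$. Then $\bigl\|f_\varphi(v)-f_\varphi(w)\bigr\|=\sqrt{a^2+b^2}\ge a$, so the distortion on this pair is
$$
\sqrt{a^2+b^2}-a\le\min\bigl\{b,\ b^2/(2a)\bigr\}.
$$
Assume $r=\|w\|\le s=\|v\|$ and set $d=s-r\le a$. Then
$$
b=c\log\frac{1+s}{1+r}=c\log\Bigl(1+\frac{d}{1+r}\Bigr)\le c\log(1+d)\le c\log(1+a).
$$
For $a\le1$ this gives a distortion of at most $c\log2$. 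For $a\ge1$ it gives at most $c^2\log^2(1+a)/(2a)\le c^2C$, where $C=\sup_{a\ge1}\log^2(1+a)/(2a)<\infty$. Choosing $c$ small enough that $c\log2\le\e$ and $c^2C\le\e$ makes $f_\e$ an $\e$-isometry. This step is routine; the only care needed is the elementary bound $\sqrt{a^2+b^2}-a\le b^2/(2a)$.

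\emph{Distance to affine subspaces.} Since $J$ is an affine isometry, it suffices to work in $\ell_2\oplus\R$ with $\Gamma=f_\varphi(\ell_2)=\bigl\{(v,c\log(1+\|v\|))\bigr\}$. Let $L=p+L_0$ be an affine subspace with direction $L_0$, and let $\pi(v,t)=t$ be the last-coordinate projection. Note that $\|x-y\|\ge|\pi(x)-\pi(y)|$, and $\pi\ge0$ on $\Gamma$.

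If $\pi|_{L_0}\ne0$, then $L$ contains points $x$ with $\pi(x)=-T$ for every $T>0$. Each such point is at distance at least $T$ from $\Gamma$, so $d_H(\Gamma,L)=\infty$.

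If $\pi|_{L_0}=0$, then $L\ss\{\pi=t_0\}$ for some $t_0$. But $\Gamma$ contains points with $\pi=c\log(1+\|v\|)\to\infty$, and these are at distance at least $c\log(1+\|v\|)-t_0\to\infty$ from $L$. Again $d_H(\Gamma,L)=\infty$.

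The main (mild) point is choosing a $\varphi$ that is unbounded, so that the image escapes every hyperplane $\{\pi=t_0\}$, yet grows sublinearly enough to keep the distortion bounded. The logarithm does this, and the case split $a\le1$ versus $a\ge1$ handles it.
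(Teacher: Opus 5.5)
Your proof is correct and follows the paper's route: the paper also takes $f_\e=H\circ f_\varphi$, with the Euclidean norm on the plane and a standard linear isometry $H\colon \ell_2\oplus\R\to\ell_2$. The paper only cites the function $\varphi$ from the last paragraph of Hyers--Ulam, whereas you make it explicit as $\varphi(t)=c\log(1+t)$ and check both the distortion bound and the infinite Hausdorff distance (via the last-coordinate functional $\pi$) yourself.
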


\begin{proof}
It is clear that if we take the Euclidean norm $\|\cdot\|$ on the plane, then for the Hilbert space $V=\ell_2$ there is a standard linear isometry $H\:W_{\ell_2}\to\ell_2=V$. The desired mapping is now given by the formula $f_\e=H\circ f_\varphi\:V\to W_V\to V$, where the required function $\varphi$ is presented explicitly in the last paragraph of the paper~\cite{HU45}.
\end{proof}

\begin{prop}\label{prop:isometryinfty}
There exists an isometry $f\:\ell_\infty\to\ell_\infty$ such that $d_H\bigl(f(\ell_\infty),\,L\bigr)=\infty$ for every affine subspace $L\ss\ell_\infty$.
\end{prop}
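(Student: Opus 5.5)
We will imitate the construction $f_\varphi(v)=\bigl(v,\varphi(\|v\|)\bigr)$ preceding Proposition~\ref{prop:isometry2}, but now with an exact isometry. Since $\ell_\infty\oplus\R$ with the max-norm $\bigl\|(v,t)\bigr\|=\max\bigl\{\|v\|_\infty,|t|\bigr\}$ is linearly isometric to $\ell_\infty$, it suffices to build an isometry $F\:\ell_\infty\to\ell_\infty\oplus_\infty\R$ and compose it with this linear isometry $H$. We put
$$
F(v)=\bigl(v,\varphi(v)\bigr),
$$
where $\varphi\:\ell_\infty\to\R$ is $1$-Lipschitz. Then $\|F(v)-F(v')\|=\max\bigl\{\|v-v'\|,|\varphi(v)-\varphi(v')|\bigr\}=\|v-v'\|$, so $F$ is an isometry for \emph{every} $1$-Lipschitz $\varphi$. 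This gives a great deal of freedom.

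We choose $\varphi(v)=\|v\|_\infty$, or a nonlinear variant such as $\varphi(v)=\bigl|\|v\|-\|v\|\bigr|$-type functions if needed. The natural first choice is $\varphi(v)=\|v\|_\infty$. The image $F(\ell_\infty)$ is then the graph of the norm, i.e.\ the boundary of a cone.

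Next we show that $d_H\bigl(H F(\ell_\infty),L\bigr)=\infty$ for every affine subspace $L$. Since $H$ is a linear isometry, this is equivalent to the same statement for $G=F(\ell_\infty)$ and affine subspaces $L\ss\ell_\infty\oplus\R$. Suppose $d_H(G,L)\le C<\infty$.

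First, $L$ must be a translate of a linear subspace $L_0$ with $d_H(G,L_0)<\infty$ as well (translations change $d_H$ by a bounded amount). Both $G$ and $L_0$ are conical (invariant under $\l>0$), so the scaling argument of Proposition~\ref{prop:equivalent} gives $d_H(G,L_0)=\l\, d_H(G,L_0)$, hence $d_H(G,L_0)=0$. Both sets are closed, so $G=L_0$. But $G$ is not a linear subspace: $(e_1,1)\in G$ while $-(e_1,1)=(-e_1,-1)\notin G$, since its last coordinate is negative. This is a contradiction.

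The main obstacle is the reduction from affine $L$ to linear $L_0$ with finite Hausdorff distance. If $L=a+L_0$, then $d_H(G,L_0)\le d_H(G,L)+\|a\|$. Thus finiteness transfers trivially, and the reduction is actually easy. The only remaining points to check carefully are that the scaling identity $d_H(\l A,\l B)=\l\, d_H(A,B)$ holds in a normed space, and that $G$ is closed. Closedness follows because $G$ is the graph of a continuous function.
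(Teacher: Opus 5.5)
Your proof is correct and uses exactly the paper's map $v\mapsto H\bigl(v,\|v\|_\infty\bigr)$, which is the paper's $f_\varphi$ with $\varphi(t)=|t|$ composed with the standard identification $\ell_\infty\oplus_\infty\R\cong\ell_\infty$; the paper cites the last paragraph of \cite{FSV02} for the infinite Hausdorff distance, while you prove it directly with the conical scaling argument of Proposition~\ref{prop:equivalent}. One small fix: in infinite dimensions a linear subspace $L_0$ need not be closed, so $d_H(G,L_0)=0$ only gives $G=\overline{L_0}$, but $\overline{L_0}$ is again a linear subspace, so your contradiction via $(-e_1,-1)\notin G$ still goes through.
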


\begin{proof}
It is clear that if we take the $\sup$-norm as the $\|\cdot\|$ norm on the plane, then the space $V=\ell_\infty$ has a standard linear isometry $H\:W_{\ell_\infty}\to\ell_\infty=V$. The desired mapping is now given by the formula $f=H\circ T_\varphi\:V\to W_V\to V$, where the required function $\varphi(t)=|t|$ is presented in the last paragraph of the paper~\cite{FSV02}.
\end{proof}

The authors believe that the central open question is~\cite[question (II) on p.4996, Problem 1]{CZ14},~\cite[Problem 3.2, 4.8(i)]{DD14}.

\begin{pr}\label{pr:isometry}
For two Banach spaces $V$, $W$, if there exists an $\e$-isometry $f\:V\to W$ for some $\e>0$, does $W$ contain an isometric\/ \(not necessarily linear\/\) copy of $V$\rom?
\end{pr}

D.~Dai and Y.~Dong~\cite[Corollary 3.4]{DD14}, assuming that $V$ is separable and $W$ is reflexive, showed the existence of a linear isometric embedding of $V$ into $W$. L.~Cheng, Q.~Cheng, K.~Tu, and J.~Zhang~\cite[Lemma 2.1]{CCTZ15} constructed an embedding into a larger space.

\begin{thm}\label{thm:CCTZ}
Suppose that $f\:V\to W$ is an $\e$-isometry and $\xi$ is a free ultrafilter on $\N$, i.e., $\xi\in\N^{*}=\beta\N\sm\N$ is a point in the growth of natural numbers in the compact Stone--\v{C}ech extension $\b\N$. Then
$$
\Phi(v)=w^{*}-\lim_{\xi}\frac{f(nv)}n\ \text{for all $v\in V$}
$$
defines an isometry $\Phi\:V\to W^{**}$.
\end{thm}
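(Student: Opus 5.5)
The proof splits into two parts. First we show that $\Phi$ is well defined and $1$-Lipschitz; this only needs compactness and lower semicontinuity. Then we show that $\Phi$ does not shrink distances; this needs a supply of norm-one functionals on $W$ that ``follow'' $f$ up to a bounded error.

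\emph{Well-definedness and the upper bound.} Since $\dis f\le\e$, for every $v\in V$ and $n\in\N$ we have
$$
\bigl\|f(nv)\bigr\|\le\bigl\|f(0)\bigr\|+n\|v\|+\e .
$$
Hence the sequence $f(nv)/n$, viewed in $W\ss W^{**}$, lies in a fixed ball of $W^{**}$. By the Banach--Alaoglu theorem this ball is weak$^*$ compact Hausdorff. Therefore the limit along the free ultrafilter $\xi$ exists and is unique, so $\Phi(v)$ is well defined.

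For $u,v\in V$, the norm of $W^{**}$ is weak$^*$ lower semicontinuous, and the limit along $\xi$ is bounded by the $\limsup$. This gives
$$
\bigl\|\Phi(u)-\Phi(v)\bigr\|\le\lim_\xi\frac{\bigl\|f(nu)-f(nv)\bigr\|}n\le\lim_\xi\frac{n\|u-v\|+\e}n=\|u-v\|.
$$

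\emph{The lower bound.} This is the real content, since weak$^*$ limits may lose norm. The plan is to prove, and then use, the following stability lemma. \emph{For every $x^*\in V^*$ with $\|x^*\|=1$ there is $\varphi\in W^*$ with $\|\varphi\|=1$ and a constant $C$ (depending only on $\e$ and $f(0)$) such that}
$$
\bigl|\langle\varphi,f(x)\rangle-\langle x^*,x\rangle\bigr|\le C\quad\text{for all }x\in V .
$$
Granting the lemma, the argument is short. Given $u\ne v$, choose $x^*$ norming $u-v$ by Hahn--Banach, and take the corresponding $\varphi$. Then
$$
\langle\Phi(u)-\Phi(v),\varphi\rangle=\lim_\xi\frac{\langle\varphi,f(nu)\rangle-\langle\varphi,f(nv)\rangle}n=\lim_\xi\frac{n\langle x^*,u-v\rangle+O(1)}n=\|u-v\|.
$$
Since $\|\varphi\|=1$, this yields $\bigl\|\Phi(u)-\Phi(v)\bigr\|\ge\|u-v\|$. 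Together with the first part, $\Phi$ is an isometry.

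\emph{Proof of the stability lemma (the main obstacle).} I would follow the Cheng--Dong--Zhang approach by separation, and expect this step to be the hardest.
\begin{enumerate}
\item Normalize $f(0)=0$; this only changes $C$.
\item For $t\in\R$, set $H_t^{\pm}=\{x\in V:\ \pm\langle x^*,x\rangle\ge t\}$.
\item Consider the convex hulls $A_t=\overline{\operatorname{co}}\,f(H_t^{+})$ and $B_t=\overline{\operatorname{co}}\,f(H_t^{-})$ for large $t$.
\item Show, using the $\e$-isometry estimate along long segments $x\pm s z$ with $\langle x^*,z\rangle=1$ and $\|z\|$ close to $1$, that the distance between $A_t$ and $B_{t'}$ grows like $t+t'$ up to an additive constant.
\item Separate these sets by norm-one functionals $\varphi_t$.
\item Pass to a weak$^*$ cluster point $\varphi$ of the $\varphi_t$ as $t\to\infty$. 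Check that the two-sided estimate survives the limit, and that $\|\varphi\|=1$ is kept, using the linear growth.
\end{enumerate}
The delicate points are two: the convex hulls must stay separated with linear growth, which is where the additive $O(\e)$ errors must not accumulate; and the cluster point $\varphi$ must not degenerate to $0$. If direct separation proves unwieldy, I would instead derive the lemma from Figiel's theorem applied to a genuine isometry obtained by rescaling $f$. Alternatively, I would simply invoke the published stability theorem of Cheng, Dong and Zhang, with $C=4\e$, as a black box.
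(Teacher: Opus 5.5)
The paper gives no proof of this statement. It is quoted from Cheng--Cheng--Tu--Zhang (Lemma~2.1 of their 2015 paper), so there is no internal argument to compare with.

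Your reduction is correct. Banach--Alaoglu gives existence and uniqueness of $\Phi(v)$. Weak$^*$ lower semicontinuity gives $\|\Phi(u)-\Phi(v)\|\le\|u-v\|$. A single $\varphi\in W^*$ with $\|\varphi\|\le1$, tracking $f$ up to a bounded error, gives the reverse inequality. However, of your three ways to obtain $\varphi$, only the last one works as written: citing the Cheng--Dong--Zhang theorem, which is legitimate.

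\emph{Separation outline.} Step~4 is the entire content of the lemma. For the images $f(H_t^+)$, $f(H_{t'}^-)$ themselves, the bound $t+t'-\e$ is trivial. The difficulty is controlling convex combinations of images under a nonlinear map, and you assert linear growth of the distance between the closed convex hulls $A_t$, $B_{t'}$ without an argument.

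\emph{Figiel fallback.} This route is circular. The genuine isometry obtained by rescaling $f$ into $W^{**}$ is $\Phi$ itself. Even granting that it is an isometry, Figiel's operator yields functionals on $\overline{\operatorname{span}}\,\Phi(V)\ss W^{**}$, not an element of $W^*$. Rescaling into an ultrapower, $v\mapsto[(f(nv)/n)_n]\in W_\xi$, does give a genuine isometry, but again only functionals on $W_\xi$, not a single $\varphi\in W^*$.

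If you want the proof self-contained, you need far less than Cheng--Dong--Zhang.

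\emph{Setup.} Normalize $f(0)=0$ and restrict $f$ to $E=\operatorname{span}\{u,v\}$. Since $E$ is finite-dimensional, smooth points are dense in $S_E$. So there are smooth points $e_k\to(u-v)/\|u-v\|$ whose unique support functionals $x_k^*\in S_{E^*}$ satisfy $\langle x_k^*,u-v\rangle\to\|u-v\|$.

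\emph{Functional for one smooth point.} Fix a smooth $e\in S_E$ with support functional $x^*$, and let $\varphi_N\in S_{W^*}$ norm $f(Ne)-f(-Ne)$. Comparing with $\|f(\pm Ne)-f(se)\|\le N\mp s+\e$ gives $|\langle\varphi_N,f(se)\rangle-s|\le3\e$ for $|s|\le N$. A weak$^*$ cluster point $\varphi$ then has this property for all $s\in\R$.

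\emph{Extending to all of $E$.} For $x\in E$ and $t>0$ you get $t-\|te-x\|-4\e\le\langle\varphi,f(x)\rangle\le\|x+te\|-t+4\e$. The norm is Gateaux differentiable at $e$ with derivative $x^*$, so letting $t\to\infty$ yields $|\langle\varphi,f(x)\rangle-\langle x^*,x\rangle|\le4\e$ on $E$.

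\emph{Conclusion.} Run your lower-bound computation with $x_k^*$ in place of a norming functional, then let $k\to\infty$. This finishes the proof without any appeal to convex-hull separation.
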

We emphasize that the isometry $\Phi\:V\to W^{**}$ need not be affine. For the mapping $f$ from Proposition~\ref{prop:isometryinfty}, the equality $\Phi=f$ holds. But for a uniformly convex Banach space $W$, for any $\e$-isometry $f\:V\to W$ with $f(0)=0$, the limit
$$
\varphi(v)=\lim_{s\to\infty}\frac{f(sv)}s
$$
exists for all $v\in V$, and the map $\varphi\:V\to W$ is a linear isometry~\cite[Proposition 2.3]{SV03}.

From the theorem of A.~Dvoretsky~\cite{D61},~\cite{M71} on almost spherical sections it follows that \emph{in any infinite-dimensional vector space $W$ for any natural $n$ there exists a sequence of subspaces $L_i^n\ss W$ such that
$$
d_{BM}\bigl(\ell_2^n,L_i^n\bigr)\-\mathop{\to}\limits_{i\to\infty}0\ \ \text{ and }\ \ d_K\bigl(\ell_2^n,\,L_i^n\bigr)\mathop{\to}\limits_{i\to\infty}0.
$$
}
But both distances $d_{BM}$ and $d_K$ have a multiplicative character and the formulated statement does not imply (linear) isometric embeddability of $\ell_2^n$ in $W$.

Let us now present some results on approximations with a small deviation of a multiplicative nature.

To clarify the importance of replacing coincidence of objects with their approximate coincidence in the age of computer data processing, we quote from~\cite[p. 270)]{SV03}. ``The notion of $\e$-isometries was motivated by the fact that observations in the real world always contain some small error. Thus, a mapping $f$ that assigns real-world points to points in a mathematical model is always an $\e$-isometry. But if we cannot measure distances precisely, then we cannot check whether $f$ is surjective. Therefore, we believe that it is more natural to study mappings that are both $\e$-isometric and $\e$-surjective. Clearly, Theorems~\ref{thm:D} and~\ref{thm:equivalent} allow some ``exact'' statements to be translated into ``approximate'' ones.

\begin{cor}\label{cor:compactspaces}
For compact Hausdorff spaces $X$ and $Y$, the following conditions are equivalent\/\rom:
\begin{enumerate}
\item[$(0)$]\namedlabel{cor:compactspases:0}{(0)} the spaces $X$ and $Y$ are homeomorphic\/\rom;
\item[$(3')$] $d_{GH}^c\bigl(C(X),C(Y)\bigr)<\infty$\rom;
\item[$(4')$] $d_{GH}\bigl(C(X),C(Y)\bigr)<\infty$\rom;
\item[$(5)$] the unit balls $B_{C(X)}$ and $B_{C(Y)}$ are isometric\rom;
\item[$(6')$]\namedlabel{cor:compactspases:6'}{(6')} $d_{BM}\bigl(C(X),C(Y)\bigr)<\log2$\rom;
\item[$(9')$]\namedlabel{cor:compactspases:9'}{(9')} $d_{GH}(B_V,B_W)<1/5$.
\end{enumerate}
\end{cor}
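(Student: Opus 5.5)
The plan is to route every condition back to (0) through Theorem~\ref{thm:compactspaces}, which identifies (0) with linear isometry of $C(X)$ and $C(Y)$. The forward implications from (0) are immediate. If $X\cong Y$, then composition with the homeomorphism is a linear isometry $C(X)\to C(Y)$. That isometry makes $d^c_{GH}$, $d_{GH}$ and $d_{BM}$ vanish, carries $B_{C(X)}$ onto $B_{C(Y)}$, and gives $d_{GH}(B_V,B_W)=0$, where $V=C(X)$ and $W=C(Y)$. So (0) implies each of $(3')$, $(4')$, $(5)$, $(6')$ and $(9')$, and $(3')$ implies $(4')$ trivially.

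For the reverse implications I would argue as follows, using Theorem~\ref{thm:equivalent} throughout.
\begin{enumerate}
\item[(a)] $(4')$ implies (0): Theorem~\ref{thm:equivalent}, implication $(4')\Rightarrow(1)$, makes $C(X)$ and $C(Y)$ linearly isometric, and then Theorem~\ref{thm:compactspaces}, $(2)\Rightarrow(1)$, gives $X\cong Y$.
\item[(b)] $(5)$ implies (0): by Mankiewicz's theorem, i.e.\ Theorem~\ref{thm:equivalent}, $(5)\Rightarrow(1)$, followed by Theorem~\ref{thm:compactspaces}.
\item[(c)] $(6')$ implies (0): this is the Amir--Cambern strengthening of Banach--Stone. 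If there is an isomorphism $T\colon C(X)\to C(Y)$ with $\|T\|\,\|T^{-1}\|<2$, then $X$ and $Y$ are homeomorphic. I would cite it directly. I cannot derive it from the results stated above, since those are all additive (finite-distortion) in nature.
\end{enumerate}

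The main obstacle is $(9')$, an additive approximate condition on the unit balls only. The available tool, the Dilworth--Tabor Theorem~\ref{thm:D}, needs maps defined on the whole space, so I would first extend from balls to cones.
\begin{enumerate}
\item[(d)] Take $f\colon B_V\to B_W$ and $g\colon B_W\to B_V$ realizing~(\ref{eq:5}) with constant $2r<2/5$.
\item[(e)] Try to show that $f$ nearly preserves the norm and nearly commutes with scalar multiples. The ball has a canonical centre $0$, characterised metrically as the unique point whose farthest-point distance equals the radius $1$, so $f(0)$ is within $O(r)$ of $0$. The idea is to use this to approximately preserve distance to the centre.
\item[(f)] Scale up by setting $F(v)=\|v\|\,f\bigl(v/\|v\|\bigr)$, or a variant. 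One cannot expect $F$ to have bounded distortion on all of $V$, because errors scale with $\|v\|$.
\end{enumerate}

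For this reason I would abandon the additive route for $(9')$ and aim for a multiplicative statement instead. The first thing to try is to show that an $\varepsilon$-isometry of unit balls of $C(K)$-spaces with $\varepsilon<2/5$ yields an isomorphism $T$ with $\|T\|\,\|T^{-1}\|<2$, and then apply step (c). A more direct alternative is to mimic the Amir--Cambern proof on the extreme structure. The points $\pm\delta_x$ of the dual ball, equivalently the maximal faces of $B_{C(X)}$, are those $h$ with $h(x)=1$, and are detectable from the ball's metric: pairs at distance $2$ and diametral configurations. Then check that an approximate isometry with error $<1/5$ matches these faces closely enough to induce a homeomorphism $X\to Y$. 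Both versions depend on $1/5$ being sharp enough, presumably tied to the Amir--Cambern constant $2$ via $1/5$. I expect this step, carrying out the transfer from $(9')$ to (0) rigorously with that constant, to be the hard part. I would most likely end up citing an existing stability result for Banach--Stone under perturbations of unit balls rather than proving it from scratch.
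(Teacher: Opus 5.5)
Your route is the same as the paper's. $(3')$, $(4')$ and $(5)$ go to linear isometry by Theorem~\ref{thm:equivalent} and then to $(0)$ by Theorem~\ref{thm:compactspaces}, and $(6')\Leftrightarrow(0)$ is the Amir--Cambern theorem. $(9')\Leftrightarrow(0)$ is also settled purely by citation: the stability result you anticipate is R.~G\'orak's 2011 theorem, which sharpens the Dutrieux--Kalton constant $1/16$ to $1/5$. Your exploratory steps (d)--(f) and the proposed reduction of $(9')$ to $(6')$ are therefore unnecessary, and as written they would not by themselves establish $(9')\Rightarrow(0)$.
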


\begin{proof}
The equivalence $\ref{cor:compactspases:0}$\Llra$\ref{cor:compactspases:6'}$ is just the theorem that was proved independently by D.~Amir~\cite{A65} and M.~Cambern~\cite{C67}.

The equivalence $\ref{cor:compactspases:0}$\Llra$\ref{cor:compactspases:9'}$ was proved in 2011 by R.~G\'orak~\cite{G11}.
\end{proof}

In 2005, Y.~Dutrieux and N.J.~Kalton~\cite{DK05} proved Corollary~\ref{cor:compactspaces} with more strong requirements ``$(9')$ $d_{GH}(B_V,B_W)<1/16$'' and ``$(8')$ $d_{K}(V,W)<1/10$''. In the condition~$\ref{cor:compactspases:6'}$ the constant $\log 2$ cannot be decreased~\cite{C75}. In~\cite{DK05} the case of countable compact spaces is considered separately, for which weaker conditions $(6')$, $(8')$, and $(9')$ are found. In~\cite{G70} and~\cite{CG13} it is shown that for countable compact spaces, one can take $\log 3$ in~$\ref{cor:compactspases:6'}$. In~\cite{CG12} the case of a discrete space $X$ is considered.

For normed spaces of finite dimension $n$, conditions like $(6')$, $(8')$, and $(9')$ cannot be included in Theorem~\ref{thm:equivalentfinite}. \emph{The Banach--Mazur space $Q(n)$ of all classes of isometric $n$-dimensional normed spaces, considered with the Banach--Mazur distance $d_{BM}$, is compact}, see~\cite[p. 544]{WOP90}. In~\cite[1.2]{K75-1}, the space $Q(n)$ is called the \emph{Minkowski space}.

In 1948, F.~John~\cite{J48} proved that \emph{for every space $V\in Q(n)$, the inequality $d_{BM}(\ell_2^2,\,V)\le\log\sqrt n$ holds, and this upper bound is sharp\/\rom: the equality sign is attained for $\ell_1^n$}. In 1996, S.~Ageev, S.~Bogatyi, and P.~Fabel~\cite{F96, ABF96, ABF98} proved that the Banach--Mazur compactum $Q(n)$ is a compact absolute retract. The complement $Q_{\cE}(2)=Q(2)\sm\{\ell_2^2\}$ of the point corresponding to the Euclidean space $\ell_2^n$ is a $Q$-manifold and has nontrivial four-dimensional cohomology, so it is not contractible~\cite{AB98}. Therefore, the \emph{Banach--Mazur compactum $Q(2)$ is not homeomorphic to the Hilbert cube $Q=I^{\aleph_0}$}~\cite{AB98}. \emph{The subset $Q_{\cE}(n)=Q(n)\sm\{\ell_2^n\}\ss Q(n)$ is a $Q$-manifold for any finite $n\ge2$}~\cite{ABRF98},~\cite{ABR03}. Since the metric space $\bigl(Q(n),\,d_{BM}\bigr)$ is compact, it is not difficult to deduce from here and from Theorem~\ref{thm:equivalentfinite} that the metrics $d_{BM}$, $d_{GH}^c$, $d_K$, and $d_{GH}$ generate the same topology on $Q(n)$.

The condition~$\ref{thm:equivalentfinitedistance:5'}$ from Theorem~\ref{thm:equivalentfinitedistance} and Problem~\ref{pr:isometry} can be connected as follows.

\begin{pr}\label{pr:boll}
Is it true that if for Banach spaces $V$ and $W$ there is an isometric embedding $f\:B_V\to W$, then there is a\/ \(linear\/\) isometric embedding $F\:V\to W$ \(such that $F|_{B_V}=f$\)\rom?
\end{pr}

For a Banach space $V$, let $Q(V)$ denote the space of all Banach spaces isomorphic to $V$, considered with the Banach--Mazur metric $d_{BM}$.

\begin{pr}\label{pr:BM}
Is it true that for non-isomorphic Banach spaces $V$ and $W$, the spaces $Q(V)$ and $Q(W)$ are not isometric\/ \(not homeomorphic\/\)\rom?
\end{pr}

\begin{prop}
If the compact sets $Q(2)$ and $Q(n)$ are isometric, then $n=2$.
\end{prop}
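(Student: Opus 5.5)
We need a metric invariant that distinguishes $Q(2)$ from $Q(n)$ for $n\ne2$. The natural choice is the diameter. By John's theorem, quoted above, every $V\in Q(n)$ satisfies $d_{BM}(\ell_2^n,V)\le\log\sqrt n$. The triangle inequality then gives $\operatorname{diam}Q(n)\le\log n$.

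An isometry preserves diameters, so it is enough to show that $\operatorname{diam}Q(n)$ is a strictly increasing function of $n$, or at least that $\operatorname{diam}Q(n)\ne\operatorname{diam}Q(2)$ for $n\ne2$. The case $n=1$ is trivial, since $Q(1)$ is a single point while $Q(2)$ is not.

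For $n\ge3$, the plan is as follows.

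\emph{Upper bound for $Q(2)$.} The diameter of $Q(2)$ is known exactly. Its value $\log(3/2)$ is attained at the pair $\ell_\infty^2$ and the regular-hexagon norm; this is Stromquist's result. It is a classical computation, and I would cite it rather than redo it.

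\emph{Lower bound for $Q(n)$, $n\ge3$.} I would produce two spaces in $Q(n)$ whose distance exceeds $\log(3/2)$. The pair $\ell_1^n$ and $\ell_2^n$ gives $d_{BM}(\ell_1^n,\ell_2^n)=\log\sqrt n$. This exceeds $\log(3/2)$ as soon as $n\ge3$, since $\sqrt3\approx1.732>1.5$. That settles every $n\ge3$.

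If instead one wants to avoid citing Stromquist, there is a cruder route. John's theorem gives $\operatorname{diam}Q(2)\le\log2$. Gluskin's theorem gives $\operatorname{diam}Q(n)\ge\log(cn)$, and this handles all large $n$. The small cases would then need explicit pairs, such as $\ell_1^n$ versus $\ell_\infty^n$, whose distance is of order $\sqrt n$ for some $n$. For $n=3$ and $n=4$ this bound does not clearly exceed $\log 2$, so the sharp planar diameter $\log(3/2)$ is really needed there.

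The main obstacle is the sharp upper bound $\operatorname{diam}Q(2)=\log(3/2)$ in the small-dimensional comparison, i.e.\ $n=3$ (and $n=4$), where the gap between the dimensions is smallest. My first attempt is to quote Stromquist's theorem and check that $\sqrt3>3/2$.

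If the citation were unavailable, I would fall back on a different isometric invariant. One candidate is whether $\ell_2^n$ is the unique point whose distance to every other point is at most half the diameter. Another is a comparison of Hausdorff dimensions: $Q(n)$ has infinite Hausdorff dimension for $n\ge2$, so I do not expect this to help. The diameter argument is therefore the one I would commit to.
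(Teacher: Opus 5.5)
Your proof is correct, and it follows a genuinely different route from the paper's.

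\textbf{The paper's route.} It starts from topology. $Q_{\cE}(n)=Q(n)\sm\{\ell_2^n\}$ is a $Q$-manifold, while $Q(2)$ is not locally a $Q$-manifold at $\ell_2^2$, because $Q_{\cE}(2)$ is not contractible. Hence any isometry $Q(2)\to Q(n)$, being a homeomorphism, must send $\ell_2^2$ to $\ell_2^n$. The paper then compares the eccentricity of the Euclidean point. By John's theorem and its sharpness at $\ell_1^n$, this eccentricity is $\log\sqrt2$ in $Q(2)$ and $\log\sqrt n$ in $Q(n)$.

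\textbf{Your route.} You use the diameter instead. This requires Stromquist's sharp planar bound $\operatorname{diam}Q(2)=\log(3/2)$ in place of John's theorem. John alone gives only $\log 2$, which is too weak for $n=3,4$ with the pair $\ell_1^n,\ell_2^n$. Then $d_{BM}(\ell_1^n,\ell_2^n)=\log\sqrt n>\log(3/2)$ settles every $n\ge3$.

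\textbf{What each buys.} Your argument is purely metric and avoids the infinite-dimensional topology entirely. It also proves slightly more: for $n\ge3$, $Q(n)$ does not even embed isometrically into $Q(2)$, and surjectivity is never used. It also shows that, for this particular pair, the diameter alone separates the compacta, which is worth setting beside the paper's closing remark about diameter estimates.

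The paper's route needs only John's theorem on the metric side. It also isolates the obstruction in a form that would extend to $Q(m)$ versus $Q(n)$ whenever $Q(m)$ is known not to be homeomorphic to the Hilbert cube. Diameter comparisons cannot do this for $m,n\ge3$, where $\operatorname{diam}Q(n)$ is known only up to the gap between Gluskin's and John's bounds.

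Your fallback via Gluskin is unnecessary, since $\sqrt n>2$ already holds for $n\ge5$.
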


\begin{proof}
Since all one-dimensional normed spaces are isometric, it follows that $\bigl|Q(1)\bigr|=1$. Therefore, we can assume that $n\ge2$. Let $f\:Q(2)\to Q(n)$ be an isometry. Since at the point $\{\ell_2^2\}\in Q(2)$ the space $Q(2)$ is locally not homeomorphic to the Hilbert cube, and the space $Q_{\cE}(n)=Q(n)\sm\{\ell_2^n\}$ is a $Q$-manifold, it follows that $f\bigl(\{\ell_2^2\}\bigr)=\{\ell_2^n\}$. Thus,
$$
f\bigl(Q(2)\bigr)=f\bigl(B(\{\ell_2^2\},\,\log\sqrt2)\bigr)=B(\{\ell_2^n\},\,\log\sqrt2)\ss Q(n).
$$
The equality $\log\sqrt2=\log\sqrt n$ implies the required equality $2=n$.
\end{proof}
The estimates of the diameter of compact sets $Q(n)$ known to the authors do not allow to distinguish the sets metrically.

Alternative proofs of the Amir-Cambern theorem were presented by H.B.~Cohen~\cite{C76} and L.~Drewnowski~\cite{D88}.

Note that J.~Benyamini~\cite{B81} generalized the Amir--Cambern theorem in the spirit of the multiplicative version of the Hyers--Ulam theorem. \emph{If $X$ is a compact metric space, $0<\e<1$, and $f$ is a linear mapping from $C(X)$ to $C(Y)$ satisfying the condition
\begin{equation}\label{eq:6}
\|\varphi\|\le\bigl\|f(\varphi)\bigr\|\le(1+\e)\|\varphi\|\ \ \text{for all $\varphi\in C(X)$},
\end{equation}
then, firstly, there exists a continuous mapping $f$ from a closed subset $Y_1$ of $Y$ onto $X$\rom; secondly, there exists a linear isometry $U$ of $C(X)$ into $C(Y)$ such that
$$
U(\varphi)(y)=\varphi\circ f(y)\ \text{ for all $y\in Y_1$},
$$
and
$$
\|Uf\|\le3\e.
$$
}
The example given by Y.~Beniamini proves that the second part of his theorem does not hold in general for non-metric spaces. K.~Jarosz showed that the first part of Beniamini's theorem also holds for non-metrizable spaces. The case $\e=0$ corresponds to Holszty\'nski's theorem~\cite{H66}.

As a work considering in~(\ref{eq:6}) not necessarily a linear mapping, we present~\cite{J89}. In~\cite{GS18} a combined additive-multiplicative version of the approximation theorem is proposed.

%%%%%%%%%%%%%%%%%%%%%%%%%%%%%%%%%%%%%%%%%%%%%%

\end{document}